\newcommand{\mycolor}{Navy}
\newtheorem{The}{Theorem}[section]
\newtheorem{Lem}[The]{Lemma}
\newtheorem{Prop}[The]{Proposition}
\newtheorem{Cor}[The]{Corollary}
\newtheorem{Rem}[The]{Remark}
\newtheorem{Def}[The]{Definition}
\newtheorem*{ackn}{Acknowledgements}
\newcommand{\B}{\mathbb{B}}
\newcommand{\C}{\mathbb{C}}
\newcommand{\R}{\mathbb{R}}
\newcommand{\Z}{\mathbb{Z}}
\newcommand{\Jj}{\mathcal{J}}
\begin{document}
%tieu de va ten tg%
  \title[Fully nonlinear elliptic equations]{On the viscosity approach to a class of fully nonlinear
  elliptic equations} 
\setcounter{tocdepth}{1}
\author{Hoang-Son Do} 
\address{Institute of Mathematics \\ Vietnam Academy of Science and Technology \\18
Hoang Quoc Viet \\Hanoi \\Vietnam}
\email{hoangson.do.vn@gmail.com, dhson@math.ac.vn}
\author{Quang Dieu Nguyen}
\address{Department of Mathematics, Hanoi National University of Education, 136-Xuan Thuy, Cau Giay, Hanoi, VietNam}
\email{dieu\_vn@yahoo.com}

\date{\today\\The first author was supported by Vietnam Academy of Science and Technology under grant number CT0000.07/21-22. The second author was supported by the Vietnam National Foundation for Science and Technology Development (NAFOSTED) under grant number 101.02-2019.304}
\maketitle
\begin{abstract}
In this paper, we study some properties of viscosity sub/super-solutions of a class of fully nonlinear elliptic equations relative to the eigenvalues of the complex Hessian. We show that every viscosity subsolution is
approximated by a decreasing sequence of smooth subsolutions. When the equations satisfy some conditions on the
limit at infinity, we verify that the comparison principle holds, and as a sequence, we obtain a result about
the existence of solution of the Dirichlet problem. Using the comparison principle, we show that, under suitable conditions, a 
Perron-Bremermann envelope can be approximated by a decreasing sequence of viscosity solutions.
\end{abstract}
\tableofcontents
\section{Introduction}
Let $\Gamma\varsubsetneq\R^n$ be an open, convex, symmetric cone with vertex at the origin such that $\Gamma_n\subseteq\Gamma\subseteq\Gamma_1$,
where
	\begin{center}
		$\Gamma_k=\{x\in\R^n: \sigma_1(x)>0,...,\sigma_k(x)>0 \},$
	\end{center}
	for every $1\leq k\leq n$. Here $\sigma_k(x)$ is the $k$-th elementary symmetric sum of the coefficients of $x$, i.e.,
	\begin{center}
		$\sigma_k(x)=\sum\limits_{1\leq j_1<...<j_k\leq n}x_{j_1}...x_{j_k}.$
	\end{center}
 Let $f:\overline{\Gamma}\rightarrow (0, \infty)$
 be a continuous function such that
  \begin{itemize}
 	\item $f$ is symmetric, strictly increasing in each variable and concave.
 	\item $f|_{\Gamma}>0$, $f|_{\partial\Gamma}=0$.
 \end{itemize}
  We define $F:\mathcal{H}^n\rightarrow [-\infty, \infty)$ by
  \begin{equation}
  F(H)=\begin{cases}
  f(\lambda(H))\quad\mbox{if}\quad H\in \overline{M(\Gamma, n)},\\
  -\infty\quad\mbox{if}\quad H\in \mathcal{H}^n\setminus\overline{M(\Gamma, n)},
  \end{cases}
  \end{equation}
   where $\mathcal{H}^n$ is the set of all $n\times n$ Hermitian matrices and
  $M(\Gamma, n)$ is the subset of $\mathcal{H}^n$ containing
matrices $H$ with the eigenvalues $\lambda (H)=(\lambda_1,...,\lambda_n)\in\Gamma$. The conditions on $f$ imply that $F$ is concave
on $\overline{M(\Gamma, n)}$ (see \cite{CNS}) and
\begin{equation}\label{F is increasing}
F(M+N)>F(M),
\end{equation}
for every $M\in M(\Gamma, n)$ and for each positive semidefinite  matrix $N\neq 0$.\\

  We consider the fully nonlinear elliptic equation:
  \begin{equation}\label{main free boundary}
  F(Hu)=\psi(z, u),
  \end{equation}
 in a bounded domain $\Omega\subset\C^n$, where $Hu$ is the complex Hessian of $u$ and
  $\psi\geq 0$ is a continuous function in $\overline{\Omega}\times\R$ which is non-decreasing in the last variable.
 
 In the smooth setting, 
 the existence and uniqueness of the classical solution of the Dirichlet problem of \eqref{main free boundary}
 has been studied in \cite{Li} (see also \cite{Sze}  for corresponding problems on compact Hermitian manifolds).
 The real version of \eqref{main free boundary} has been studied earlier by
  Caffarelli-Nirenberg-Spruck \cite{CNS}, Guan \cite{Guan94}, Trudinger \cite{Tr95}...
 (see also \cite{ITW04}, \cite{Guan14}, \cite{GJ15}, \cite{GJ16}, \cite{JT20} for some recent developments).
 Some  important equations of the form \eqref{main free boundary} are
  the complex Monge-Amp\`ere equations,
 the complex Hessian equations and the complex Hessian quotient equations, where we take, respectively, $f(x)=(\sigma_n(x))^{1/n}$,
 $f(x)=(\sigma_k(x))^{1/k} (1\leq k\leq n)$ and $f(x)=(\sigma_k(x)/\sigma_l(x))^{k-l} (1\leq l<k\leq n)$.

 The viscosity method introduced in \cite{CL83, Lions83} (see \cite{CIL92} for a survey) is useful for studying
 partial differential equations in the non-smooth setting.
  A viscosity approach to the equation \eqref{main free boundary} has been studied in \cite{DDT}.
  The goal of this paper is to expand this research direction. Following \cite{CIL92}, a
  function $u\in USC(\Omega)$ is a viscosity subsolution (resp., supersolution) of \eqref{main free boundary} iff for every $z_0\in\Omega$ and for any
  $C^2$-smooth function $q$ in a neighbourhood $U$ of $z_0$ such that $(u-q)(z_0)=0=\max_U(u-q)$
  (resp., $(u-q)(z_0)=0=\min_U(u-q)$),
   we have $F(Hq(z_0))\geq\psi(z_0, u(z_0))$ (resp., $F(Hq(z_0))\geq\psi(z_0, u(z_0))$). 
   The reader can find more details about the defintions and  properties of
   viscosity sub/super-solutions in \cite{DDT}
  (see also  the Preliminaries). By \cite[Lemma 4.6, Remark 4.9 and Theorem B.8]{HL},
   for every $u\in USC(\Omega)$, the following conditions are equivalent:
   \begin{itemize}
   	\item[(i)] $F(Hu)\geq 0$ in the viscosity sense in $\Omega$.
   	\item[(ii)] For every open set $U\Subset\Omega$ there exists a decreasing sequence $\{u_j\}$ of smooth
   	 $\Gamma$-subharmonic functions on $U$ such that $u_j\rightarrow u$ as $j\rightarrow\infty$. Here a smooth
   	 function $w$ is  $\Gamma$-subharmonic if $Hw(z)\in M(\Gamma, n)$ for every $z$.
   \end{itemize}
We say that a function $u\in USC(\Omega)$ is $\Gamma$-subharmonic
if it satisfies the above equivalent conditions. Since $\Gamma\subset\Gamma_1$, every  $\Gamma$-subharmonic function is subharmonic. An alternative proof for the equivalence of (i) and (ii)
 is provided in this paper (see Corollary \ref{prop gsh gvis}). 
Futhermore, we generalize this fact 
for viscosity subsolutions of \eqref{main free boundary} in the case where 
$\psi (z, r)$ is independent of $r$. 
     Our first main result is as follows:
\begin{The}\label{main 1}
	Assume that $\psi (z, r)$ does not depend on the last variable $r\in\R$. Then a function $u\in USC(\Omega)$ is a viscosity subsolution of \eqref{main free boundary} iff $u$ is $\Gamma$-subharmonic
and $F(H(u\ast\chi_{\epsilon}))\geq \psi\ast\chi_{\epsilon}$ in $\Omega_{\epsilon}$ (in the classical sense).
Here $\chi_{\epsilon}$ is the standard modifier, 
$\ast$ is the convolution operator and 
$\Omega_{\epsilon}=\{z\in\Omega: d(z, \partial\Omega)>\epsilon\}$.
\end{The}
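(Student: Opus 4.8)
The plan is to prove the two implications separately. The implication ($\Leftarrow$) rests on stability of viscosity subsolutions under decreasing limits, while the substantive implication ($\Rightarrow$) rests on a Jensen‑type inequality made available by the concavity of $F$. Throughout write $u_\epsilon:=u\ast\chi_\epsilon$ and $\psi_\epsilon:=\psi\ast\chi_\epsilon$.

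For ($\Leftarrow$), assume $u$ is $\Gamma$-subharmonic with $F(Hu_\epsilon)\ge\psi_\epsilon$ on $\Omega_\epsilon$ for all small $\epsilon$. Since $\Gamma\subseteq\Gamma_1$, $u$ is subharmonic, so $u_\epsilon$ is smooth and $u_\epsilon\downarrow u$ as $\epsilon\downarrow 0$. Because $\overline{M(\Gamma,n)}$ is convex, being the superlevel set $\{F\ge 0\}$ of the concave $F$, the mollification $u_\epsilon$ is again $\Gamma$-subharmonic, so $Hu_\epsilon(z)\in\overline{M(\Gamma,n)}$ and $F(Hu_\epsilon(z))\ge\psi_\epsilon(z)$; hence $u_\epsilon$ is a classical, and therefore viscosity, subsolution of $F(H\cdot)=\psi_\epsilon$. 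Indeed, if a $C^2$ function $q$ touches $u_\epsilon$ from above at a point, then $Hq\ge Hu_\epsilon$ there, so $Hq\in\overline{M(\Gamma,n)}$ by the ellipticity implicit in \eqref{F is increasing} and $F(Hq)\ge F(Hu_\epsilon)\ge\psi_\epsilon$. Since $u_\epsilon\downarrow u$ and $\psi_\epsilon\to\psi$ locally uniformly, the standard stability theorem for viscosity subsolutions under monotone limits with locally uniformly convergent right-hand side shows that $u$ is a viscosity subsolution of \eqref{main free boundary}.

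For ($\Rightarrow$), assume $u$ is a viscosity subsolution. Since $\psi\ge 0$, the subsolution inequality forces $F(Hu)\ge 0$ in the viscosity sense, so $u$ is $\Gamma$-subharmonic by the equivalence of (i) and (ii). It remains to prove $F(Hu_\epsilon)\ge\psi_\epsilon$ on $\Omega_\epsilon$; as $u_\epsilon$ is smooth this is a pointwise classical inequality, and its heart is that convolution with $\chi_\epsilon$ carries a subsolution to a subsolution of the mollified equation. To make this rigorous I would regularize by sup-convolution, setting $u^\delta(z)=\sup_y\big(u(y)-\tfrac{1}{2\delta}|z-y|^2\big)$ on a slightly smaller domain. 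Then $u^\delta$ is semiconvex and $u^\delta\downarrow u$; because $F$ sees only the complex Hessian and is translation invariant, $u^\delta$ is a viscosity subsolution of $F(Hu^\delta)\ge\psi^\delta$, where $\psi^\delta(z)=\min_{|\zeta-z|\le r_\delta}\psi(\zeta)$ with $r_\delta\to 0$; in particular $\psi^\delta\ge 0$, so $u^\delta$ is again $\Gamma$-subharmonic, and $\psi^\delta\to\psi$ locally uniformly.

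By Alexandrov's theorem $u^\delta$ is twice differentiable at almost every point, and at each such point its second-order Taylor polynomial lies in the closure of the superjet, so $F(Hu^\delta(z))\ge\psi^\delta(z)$ with the Alexandrov Hessian $Hu^\delta(z)\in\overline{M(\Gamma,n)}$ for a.e. $z$. Writing the distributional complex Hessian of the semiconvex $u^\delta$ as its absolutely continuous part (the a.e. Alexandrov Hessian) plus a positive semidefinite singular part, the monotonicity \eqref{F is increasing} and the concavity of $F$ give, for every $z\in\Omega_\epsilon$,
\[
F\big(H(u^\delta\ast\chi_\epsilon)(z)\big)\ \ge\ F\Big(\int Hu^\delta(z-w)\,\chi_\epsilon(w)\,dw\Big)\ \ge\ \int F\big(Hu^\delta(z-w)\big)\,\chi_\epsilon(w)\,dw\ \ge\ (\psi^\delta\ast\chi_\epsilon)(z),
\]
where $Hu^\delta$ denotes the a.e. Alexandrov Hessian, the first inequality discards the positive singular part, the second is Jensen's inequality for the concave $F$, and the last uses $F(Hu^\delta)\ge\psi^\delta$ a.e. Since $u^\delta\downarrow u$, the smooth functions $u^\delta\ast\chi_\epsilon$ converge to $u_\epsilon$ in $C^2_{loc}(\Omega_\epsilon)$, hence $H(u^\delta\ast\chi_\epsilon)\to Hu_\epsilon$ locally uniformly within the closed set $\overline{M(\Gamma,n)}$; letting $\delta\to 0$ and invoking continuity of $F$ on $\overline{M(\Gamma,n)}$ together with $\psi^\delta\ast\chi_\epsilon\to\psi_\epsilon$ yields $F(Hu_\epsilon)\ge\psi_\epsilon$ on $\Omega_\epsilon$.

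I expect the main obstacle to be the sup-convolution step combined with the passage from the a.e. Alexandrov inequality to the mollified inequality: one must verify carefully that sup-convolution preserves the (translated) subsolution property for this $-\infty$-valued operator, and that the singular part of the complex Hessian of a semiconvex $\Gamma$-subharmonic function is positive semidefinite, so that the monotonicity of $F$ can be applied before Jensen's inequality. The remaining limits are routine given the continuity of $F$ on $\overline{M(\Gamma,n)}$ and the local uniform convergences.
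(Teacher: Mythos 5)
Your backward implication is essentially the paper's: the classical inequality for the smooth mollification gives a viscosity inequality, and one passes to the decreasing limit $u_\epsilon\downarrow u$ (the paper replaces the $\epsilon$-dependent right-hand side $\psi_\epsilon$ by the fixed minorant $\psi_r(z)=\inf_{|z-w|<r}\psi(w)$ for $\epsilon<r$ so that Proposition \ref{prop liminf limsup} applies verbatim, then lets $r\to0$; your ``locally uniformly convergent right-hand side'' variant amounts to the same thing). The forward implication, however, takes a genuinely different route. The paper never touches sup-convolution, Alexandrov's theorem or Jensen's inequality: it linearizes the concave operator, writing $F(B)=\inf_H\{\Delta_{\widetilde H}B+F(H)-\Delta_{\widetilde H}H\}$ over $H\in M(\Gamma,n)$ with $\widetilde H$ positive definite (Lemma \ref{lem vis sh}), so the single fully nonlinear viscosity inequality becomes a family of \emph{linear} elliptic viscosity inequalities $\Delta_{\widetilde H}u\ge\psi-F(H)+\Delta_{\widetilde H}H$; by Lemma \ref{lem poisson} each of these holds in the distribution sense, distributional linear inequalities commute with convolution, and taking the infimum back over $H$ (parts a) and b) of Lemma \ref{lem vis sh}) yields both $Hu_\epsilon\in\overline{M(\Gamma,n)}$ and $F(Hu_\epsilon)\ge\psi_\epsilon$ pointwise. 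That approach needs only $u\in L^1_{loc}$ (automatic from subharmonicity) and avoids all the measure-theoretic bookkeeping your argument requires; your approach is the classical one of \cite{EGZ} and \cite{Lu} and is more self-contained in that it does not need the viscosity-versus-distributions lemma.

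There is one genuine gap in your version as written. A real-valued $u\in USC(\Omega)$ satisfying the hypotheses need not be locally bounded below, and the maximizing point $y_\delta(z)$ of the sup-convolution satisfies only $|y_\delta(z)-z|^2\le 2\delta\bigl(\sup_K u-u(z)\bigr)$, which is not uniform in $z$. Hence there is no radius $r_\delta\to0$ making your $\psi^\delta(z)=\min_{|\zeta-z|\le r_\delta}\psi(\zeta)$ a valid minorant converging locally uniformly to $\psi$. This is repairable: keep the pointwise inequality $F(Hu^\delta(z))\ge\psi(y_\delta(z))$ at a.e.\ point of twice differentiability, note $y_\delta(z)\to z$ pointwise as $\delta\to0$, and replace the locally uniform convergence $\psi^\delta\ast\chi_\epsilon\to\psi_\epsilon$ in your final limit by an application of Fatou's lemma to $\int\psi(y_\delta(z-w))\chi_\epsilon(w)\,dw$. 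With that modification (and the checks you already flag on the positive semidefiniteness of the singular part and on Jensen for the concave $F$ on the closed convex set $\overline{M(\Gamma,n)}$), your argument goes through.
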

When $f$ satisfies some conditions on the limit at infinity, we use Theorem \ref{main 1} to show that 
every viscosity subsolution of \eqref{main free boundary} can be approximated by a decreasing sequence of
classical subsolution of \eqref{main free boundary}. In the cases of Monge-Amp\`ere equations and Hessian 
equations, this fact has been proved in \cite{EGZ} and \cite{Lu}.
\begin{Cor}\label{cor1 main1}
	Assume that $\psi (z, r)$ does not depend on $r$ and
	\begin{center}
		$\lim\limits_{R\to\infty}f(R, R,..., R)>\sup\limits_{\Omega}\psi.$
	\end{center} 
Then a function 
	$u\in USC(\Omega)$ is a viscosity subsolution of \eqref{main free boundary} iff for every 
	open set $U\Subset\Omega$, there exists a decreasing sequence $\{u_j\}$ of smooth
	$\Gamma$-subharmonic functions on $U$ such that $u_j\rightarrow u$ as $j\rightarrow\infty$ and 
	$F(Hu_j(z))\geq\psi (z)$ in $U$ for every $j$.
\end{Cor}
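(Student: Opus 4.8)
The plan is to derive both implications from Theorem~\ref{main 1}, handling the two directions separately; the substance is entirely in the ``only if'' direction, while the ``if'' direction is a routine stability argument.

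For the ``if'' direction, suppose that on each $U\Subset\Omega$ there is a decreasing sequence $\{u_j\}$ of smooth $\Gamma$-subharmonic functions with $F(Hu_j)\geq\psi$ and $u_j\to u$. First I would check that each $u_j$ is a viscosity subsolution: if a $C^2$ function $q$ touches $u_j$ from above at $z_0$, then $H(q-u_j)(z_0)$ is positive semidefinite, so $Hq(z_0)=Hu_j(z_0)+N$ with $N\geq0$, and since adding a positive semidefinite matrix keeps the Hessian in $M(\Gamma,n)$, property \eqref{F is increasing} gives $F(Hq(z_0))\geq F(Hu_j(z_0))\geq\psi(z_0)$. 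Because a decreasing limit of viscosity subsolutions is again a viscosity subsolution and being a subsolution is a local property, $u$ is a viscosity subsolution of \eqref{main free boundary} on all of $\Omega$.

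For the ``only if'' direction, fix $U\Subset\Omega$ and choose $U\Subset U'\Subset\Omega$. Applying Theorem~\ref{main 1}, $u$ is $\Gamma$-subharmonic and $F(H(u\ast\chi_\epsilon))\geq\psi\ast\chi_\epsilon$ on $U'$ for all small $\epsilon$. The regularizations $u\ast\chi_\epsilon$ are smooth, $\Gamma$-subharmonic, and decrease to $u$ as $\epsilon\downarrow0$; the only defect is that $\psi\ast\chi_\epsilon$ need not dominate $\psi$. Writing $\omega$ for a nondecreasing modulus of continuity of $\psi$ on $\overline{U'}$, continuity yields $\psi\ast\chi_\epsilon\geq\psi-\omega(\epsilon)$ on $U$. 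To repair the right-hand side I would exploit the hypothesis $\lim_{R\to\infty}f(R,\dots,R)>\sup_\Omega\psi$: pick $A>0$ with $M':=f(A,\dots,A)>\sup_\Omega\psi$, put $\eta:=M'-\sup_\Omega\psi>0$, and set $w:=A|z|^2+C$, with $C$ so large that $w\geq\sup_{\overline{U'}}u\geq u\ast\chi_\epsilon$ on $U$. Then $Hw=A\,\mathrm{Id}$, so $w$ is smooth $\Gamma$-subharmonic with $F(Hw)=M'$. Taking $s_\epsilon:=\bigl(\omega(\epsilon)+\epsilon\bigr)/\bigl(\eta+\omega(\epsilon)+\epsilon\bigr)\in(0,1)$ and
\[
u_\epsilon:=(1-s_\epsilon)\,u\ast\chi_\epsilon+s_\epsilon w ,
\]
each $u_\epsilon$ is smooth and $\Gamma$-subharmonic, since $s_\epsilon>0$ and $Hw$ is interior to $M(\Gamma,n)$ force the Hessian into the open cone; moreover concavity of $F$ gives
\[
F(Hu_\epsilon)\geq(1-s_\epsilon)F(H(u\ast\chi_\epsilon))+s_\epsilon F(Hw)\geq(1-s_\epsilon)\bigl(\psi-\omega(\epsilon)\bigr)+s_\epsilon M'\geq\psi ,
\]
the last step being the elementary inequality $s_\epsilon(M'-\psi)\geq(1-s_\epsilon)\omega(\epsilon)$, valid because $M'-\psi\geq\eta$ and $s_\epsilon\geq\omega(\epsilon)/(\eta+\omega(\epsilon))$.

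The main obstacle is that this perturbation need not preserve monotonicity, so the family $\{u_\epsilon\}$ is not obviously decreasing. The point I would stress is that the weight $s_\epsilon$ is \emph{nondecreasing} in $\epsilon$; combined with $u\ast\chi_{\epsilon_1}\geq u\ast\chi_{\epsilon_2}$ for $\epsilon_1>\epsilon_2$ and $w\geq u\ast\chi_{\epsilon_1}$, a short rearrangement gives
\[
u_{\epsilon_1}-u_{\epsilon_2}=(1-s_{\epsilon_2})\bigl(u\ast\chi_{\epsilon_1}-u\ast\chi_{\epsilon_2}\bigr)+(s_{\epsilon_1}-s_{\epsilon_2})\bigl(w-u\ast\chi_{\epsilon_1}\bigr)\geq0 ,
\]
so $u_\epsilon$ decreases as $\epsilon\downarrow0$. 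Finally, since $u\ast\chi_\epsilon\downarrow u$, $s_\epsilon\to0$, and $w$ is bounded on $U$, we have $u_\epsilon\to u$ pointwise. Choosing any $\epsilon_j\downarrow0$ and setting $u_j:=u_{\epsilon_j}$ produces the required decreasing sequence on $U$, which completes the ``only if'' direction.
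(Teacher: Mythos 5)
Your proof is correct and follows essentially the same route as the paper's: the ``if'' direction via stability of viscosity subsolutions under decreasing limits, and the ``only if'' direction via Theorem \ref{main 1} followed by a quadratic perturbation of $u\ast\chi_{\epsilon}$ whose size is calibrated, using the concavity of $F$ and the hypothesis $\lim_{R\to\infty}f(R,\dots,R)>\sup_{\Omega}\psi$, to absorb the error between $\psi$ and $\psi\ast\chi_{\epsilon}$. The only cosmetic difference is that the paper adds the term $R|z|^2/(2^{j+1}r)$ directly, so that monotonicity of the sequence is automatic, whereas you form a convex combination with a shifted quadratic $w\geq u\ast\chi_{\epsilon}$ and verify monotonicity by a rearrangement.
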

Our second purpose is to study the comparison principle for \eqref{main free boundary}.
 It follows from \cite{I89} that
the comparison principle holds if $\psi(z, r)-\epsilon r$ is non-decreasing in $r$ for some $\epsilon>0$.
Under appropriate growth restrictions on the behavior of $F$, one can permit $\epsilon=0$ (see \cite{DDT}).
In this paper, we establish a version of the comparison principle with weaker conditions for $F$:
\begin{The}\label{main 2}
		Let $\Omega\subset\C^n$ be a bounded domain. Let $u\in USC\cap L^{\infty}(\overline{\Omega})$ and 
	$v\in LSC\cap L^{\infty}(\overline{\Omega})$, respectively, be a bounded subsolution and a 
	bounded supersolution of the equation
	\begin{equation}
	F(H w)=\psi (z, w),
	\end{equation}  in $\Omega$. 
		Assume that $u\leq v$ in $\partial\Omega$ and
	\begin{center}
		$\lim\limits_{R\to\infty}f(R, R,..., R)>\sup\limits_{K}\psi (z, v(z)),$
	\end{center}
	for every $K\Subset\Omega$. Then $u\leq v$ in $\Omega$.
\end{The}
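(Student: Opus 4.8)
The plan is to argue by contradiction, combining the doubling of variables (the Jensen--Ishii ``theorem on sums'') with a localized strict-subsolution construction powered by the limit hypothesis. Suppose $M:=\sup_{\overline\Omega}(u-v)>0$. Since $u-v$ is upper semicontinuous and $u\le v$ on $\partial\Omega$, the superlevel set $K:=\{u-v\ge M/2\}$ is a compact subset of $\Omega$, and $u-v$ attains its maximum at some $\hat z\in K$.

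The first step, and the conceptual heart of the argument, is to run the \emph{unperturbed} doubling in order to reconcile the hypothesis (which is phrased through $\psi(\cdot,v)$) with the value of $u$ at $\hat z$. Maximizing $\Phi_\alpha(z,w)=u(z)-v(w)-\frac\alpha2|z-w|^2$ over $\overline\Omega\times\overline\Omega$ produces interior maximizers $(z_\alpha,w_\alpha)\to\hat z$ with $u(z_\alpha)\to u(\hat z)$, $v(w_\alpha)\to v(\hat z)$ and $\alpha|z_\alpha-w_\alpha|^2\to0$. The theorem on sums yields Hermitian matrices $X_\alpha\le Y_\alpha$ with $X_\alpha$ in the superjet of $u$ at $z_\alpha$ and $Y_\alpha$ in the subjet of $v$ at $w_\alpha$; the subsolution inequality forces $X_\alpha\in\overline{M(\Gamma,n)}$, and since $X_\alpha\le Y_\alpha$ and $\overline{M(\Gamma,n)}$ is stable under adding positive semidefinite matrices, monotonicity of $F$ gives $\psi(z_\alpha,u(z_\alpha))\le F(X_\alpha)\le F(Y_\alpha)\le\psi(w_\alpha,v(w_\alpha))$. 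Passing to the limit (using continuity of $\psi$) yields $\psi(\hat z,u(\hat z))\le\psi(\hat z,v(\hat z))$, while $u(\hat z)>v(\hat z)$ and the monotonicity of $\psi$ give the reverse inequality. Hence $\psi(\hat z,u(\hat z))=\psi(\hat z,v(\hat z))\le\sup_{K}\psi(z,v(z))<\lim_{R\to\infty}f(R,\dots,R)$. This identity is what makes the hypothesis usable: although it controls $\psi$ only at the level of $v$, at a maximizer the degeneracy transfers that control to the level of $u$.

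With $\psi(\hat z,u(\hat z))<\lim_R f(R,\dots,R)$ in hand, I would choose $R_0$ with $f(R_0,\dots,R_0)>\psi(\hat z,u(\hat z))$; since $u$ is upper semicontinuous and $\psi$ is continuous and non-decreasing in $r$, one has $\limsup_{z\to\hat z}\psi(z,u(z))\le\psi(\hat z,u(\hat z))$, so $f(R_0,\dots,R_0)>\psi(z,u(z))$ on a ball $B=B(\hat z,r)\Subset\Omega$. On $B$ set $u_t:=(1-t)u+t(R_0|z|^2+c)$ for $t\in(0,1)$, with the constant $c$ so negative that $R_0|z|^2+c\le u$ on $\overline B$; then $u_t\le u$, $u_t\to u$ as $t\to0$, and by concavity of $F$ together with $F(R_0 I)=f(R_0,\dots,R_0)$ a test-function computation gives, in the viscosity sense, $F(Hu_t)\ge(1-t)\psi(z,u)+t\,f(R_0,\dots,R_0)$. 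Because $u_t\le u$ and $\psi$ is non-decreasing, $\psi(z,u_t)\le\psi(z,u)$, so on $B$ the function $u_t$ is a \emph{strict} subsolution: $F(Hu_t)\ge\psi(z,u_t)+\eta_t$ with $\eta_t=\tfrac t2\min_{\overline B}\bigl(f(R_0,\dots,R_0)-\psi(\cdot,u)\bigr)>0$.

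Finally I would close the argument with a second doubling, now for $u_t$ and $v$ localized to $B$, after adding a small quadratic penalty so that the maximizer of $u_t-v$ over $\overline B$ is an interior point of $B$ (legitimate because $\hat z\in B$ is a global maximizer of $u-v$ and $u_t\to u$). Running the theorem on sums as before but invoking the strict inequality yields, at the interior maximizer, $\psi(\cdot,u_t)+\eta_t\le\psi(\cdot,v)$; combined with $u_t>v$ there and monotonicity of $\psi$ this forces $\eta_t\le0$, a contradiction. The main obstacle is precisely the reconciliation carried out in the second paragraph: recognising that the degeneracy (one only gets the equality $\psi(\hat z,u(\hat z))=\psi(\hat z,v(\hat z))$, not strict monotonicity) is exactly the identity that transfers the growth hypothesis from the level of $v$ to the level of $u$. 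The two remaining technical points are verifying the complex Hessian form of the theorem on sums for the $[-\infty,\infty)$-valued operator $F$ and localizing the maximizer to $B$ by penalization.
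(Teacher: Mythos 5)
Your overall mechanism is the right one --- concavity of $F$ together with the growth hypothesis is what manufactures the quantitative strictness --- and your first two steps are sound: the unperturbed doubling does give $\psi(\hat z,u(\hat z))=\psi(\hat z,v(\hat z))<\lim_R f(R,\dots,R)$, and the convex combination $u_t=(1-t)u+t\rho$ with $\rho=R_0|z|^2+c$ is a genuine strict subsolution on $B$ (the test-function splitting works because one summand is smooth). The gap is in the final step, the one you dismiss as a technical point: localizing the second doubling to $B$ by a quadratic penalty. The penalty shifts the jets by $\pm\sigma I$, so after the theorem on sums you are left needing a bound of the form $F(Y_\alpha+\sigma I)\le F(Y_\alpha)+o(1)$ (or $F(X_\alpha)\le F(X_\alpha-\sigma I)+o(1)$) uniformly in $\alpha$. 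For this degenerate operator that bound is false: the very concavity estimate that powers the proof shows $F(Y+\sigma I)-F(Y)$ is bounded \emph{below} by a positive constant whenever $F(Y)\le M'$, and it can be arbitrarily large --- for $F=(\det)^{1/n}$ and $Y=\mathrm{diag}(0,R,\dots,R)$ one gets $F(Y+\sigma I)-F(Y)\approx\sigma^{1/n}R^{(n-1)/n}$, while the matrices produced by the theorem on sums are only $O(\alpha)$. So the penalization error cannot be absorbed into $\eta_t$. The obvious repairs are blocked structurally: you cannot instead choose $\rho$ to peak at $\hat z$ relative to $\partial B$ (a $\Gamma$-subharmonic $\rho$ obeys the maximum principle), and gluing $u_t$ with $u-s$ outside $B$ requires $u(\hat z)-\rho(\hat z)<\inf_{\overline B}(u-\rho)$, which is impossible. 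Since $\max_{\partial B}(u-v)$ may equal $M$, you cannot guarantee an interior maximizer of $u_t-v$ on $\overline B$ without some such device.

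The paper avoids the problem by reversing the order of operations. It perturbs globally, replacing $u$ by $u+\delta(|z|^2-A)\le u$, which is still a subsolution on all of $\Omega$ and still $\le v$ on $\partial\Omega$; hence a single doubling on $\overline\Omega{}^2$ has interior maximizers for free, with no penalty needed. The strictness is then extracted \emph{a posteriori}, at the matrix level: from $F(Z_N)\le F(W_N)\le\psi(w_N,v(w_N))\le M$ (so the hypothesis on $\psi(\cdot,v)$ is used exactly where the supersolution provides the bound, making your transfer step unnecessary) and the growth of $f$ at infinity, concavity of $F$ yields the uniform gap $F(Z_N)-F(Z_N-2\delta I)\ge c(\delta,M,r,R_0)>0$, which is the inequality your $\eta_t$ was meant to provide. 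If you want to salvage your architecture you would need to produce the strict subsolution globally (or at least on a neighbourhood of the full set $\{u-v=\max(u-v)\}$), which your first doubling does not give you, since it identifies $\psi(\cdot,u)$ with $\psi(\cdot,v)$ only at one unspecified maximizer.
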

By Theorem \ref{main 2} and the Perron method, 
if $v$ is a viscosity supersolution of \eqref{main free boundary} satisfying some suitable conditions
then the function
\begin{center}
	$\Phi_v=\sup\{w: w$ is a subsolution of \eqref{main free boundary}, $w\leq v \},$
\end{center}
is a discontinuous viscosity solution of \eqref{main free boundary} (see Proposition \ref{prop perron}).
In the case where $\psi(z, r)$ is independent of $r$, we show that $\Phi_v$ is further locally approximated by 
 a decreasing sequence of viscosity solutions. 
 \begin{The}\label{main 3}
 	Assume that $\psi$ does not depend on the last variable and
 		\begin{center}
 			$\lim\limits_{R\to\infty}f(R, R,..., R)>\psi (z),$
 		\end{center}
 		for every $z\in\Omega$.
 		Suppose that $v\in L^{\infty}\cap LSC(\overline{\Omega})$ is a bounded viscosity supersolution 
 		of \eqref{main free boundary} in $\Omega$ which is continuous at every point $z\in\partial\Omega$.
 	Then, for every relatively compact open subset $U$ of $\Omega$, there exists
 	a decreasing sequence $u_j$ of viscosity solutions of \eqref{main free boundary} in $U$ such that
 	$\lim_{j\to\infty}u_j=\Phi_v$ in $U$.
 \end{The}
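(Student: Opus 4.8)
The plan is to realize $\Phi_v$ on $U$ as the decreasing limit of solutions of Dirichlet problems whose boundary data decrease to $\Phi_v^{\ast}$. Throughout I write $\psi=\psi(z)$, and I freely use that $\Phi_v^{\ast}$ is a subsolution of \eqref{main free boundary} on $\Omega$ together with the bound $\Phi_v^{\ast}\le v$, both of which come with the Perron construction behind Proposition \ref{prop perron}. Fix a relatively compact open $U\Subset\Omega$; after enlarging $U$ slightly to a smoothly bounded strictly $\Gamma$-pseudoconvex domain $U'$ with $U\Subset U'\Subset\Omega$ and restricting at the end, I may assume $U$ itself admits barriers, so that the Dirichlet problem on $U$ is solvable for continuous boundary data. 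Since $\Phi_v^{\ast}$ is a subsolution, Corollary \ref{cor1 main1} (whose hypothesis $\lim_{R\to\infty}f(R,\dots,R)>\psi(z)$ is exactly our assumption) furnishes a decreasing sequence of smooth $\Gamma$-subharmonic functions $g_j$ on a neighbourhood of $\overline U$ with $F(Hg_j)\ge\psi$ and $g_j\downarrow\Phi_v^{\ast}$.

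For each $j$ I would solve $F(Hu_j)=\psi$ in $U$ with $u_j=g_j$ on $\partial U$, using the smooth subsolution $g_j$ as a lower barrier, a large multiple of $|z|^2$ (a supersolution by the growth condition) as an upper barrier, and the comparison principle Theorem \ref{main 2} for uniqueness; this yields $u_j\in C(\overline U)$, a genuine viscosity solution. Because $g_{j+1}\le g_j$ on $\partial U$, Theorem \ref{main 2} gives $u_{j+1}\le u_j$, so $u_j\downarrow u$ for some $u\in USC(U)$ with $u\ge\lim_j g_j=\Phi_v^{\ast}\ge\Phi_v$. More precisely, for every global subsolution $w\le v$ one has $w\le\Phi_v^{\ast}\le g_j$ on $\partial U$, whence $w\le u_j$ on $U$ by Theorem \ref{main 2}; taking the supremum over $w$ and then $j\to\infty$ gives the lower bound $u\ge\Phi_v$ on $U$.

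For the reverse inequality I would first check that $u$ is a subsolution on $U$. Each $u_j$ is a subsolution, so by Theorem \ref{main 1} it is $\Gamma$-subharmonic with $F\big(H(u_j\ast\chi_{\epsilon})\big)\ge\psi\ast\chi_{\epsilon}$; the decreasing limit $u$ is again $\Gamma$-subharmonic, and since $u_j\downarrow u$ in $L^1_{\mathrm{loc}}$ the convolutions $u_j\ast\chi_{\epsilon}$ converge to $u\ast\chi_{\epsilon}$ in $C^{\infty}_{\mathrm{loc}}$, so the inequality passes to the limit and Theorem \ref{main 1} shows $u$ is a subsolution on $U$. Next, from $u\le u_j$ and $u_j|_{\partial U}=g_j\downarrow\Phi_v^{\ast}$ I get $\limsup_{U\ni z\to\zeta}u(z)\le\Phi_v^{\ast}(\zeta)$ for every $\zeta\in\partial U$; and comparing $u_j$ with the supersolution $v$ (using $g_j\le v$ on $\partial U$, available since $\Phi_v^{\ast}\le v$) gives $u\le v$ on $U$. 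The function $\hat u$ equal to $u$ on $U$ and to $\Phi_v^{\ast}$ on $\Omega\setminus U$ is then, by the gluing property for subsolutions, a subsolution on all of $\Omega$; it satisfies $\hat u\le v$ (on $U$ because $u\le v$, off $U$ because $\Phi_v^{\ast}\le v$), so by maximality of $\Phi_v$ we obtain $\hat u\le\Phi_v$, i.e. $u\le\Phi_v$ on $U$. Combined with the lower bound this yields $u_j\downarrow u=\Phi_v$ on $U$, proving the theorem.

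The main obstacle is the reverse inequality, and specifically the production of the global competitor: the gluing step requires both that $u$ be a subsolution with the correct boundary $\limsup$ on $\partial U$ and that the glued function stay below $v$, which rests on the inequality $\Phi_v^{\ast}\le v$. I would treat $\Phi_v^{\ast}\le v$ as the technical heart, establishing it (if it is not already recorded with Proposition \ref{prop perron}) by the usual bump argument: were $\Phi_v^{\ast}>v$ somewhere, one could perturb $\Phi_v^{\ast}$ upward against the supersolution $v$ to contradict the maximality defining $\Phi_v$. A secondary technical point is the solvability of the Dirichlet problem on a general $U$; I would circumvent it by working on the slightly larger smooth domain $U'$ and restricting, since the restriction of a solution on $U'$ to $U$ is again a solution and $\Phi_v$ is unchanged.
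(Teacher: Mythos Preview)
Your overall architecture matches the paper's: reduce to a smooth $U$, approximate $\Phi_v$ from above by smooth subsolutions $g_j$ via Corollary~\ref{cor1 main1}, solve $F(Hu_j)=\psi$ on $U$ with boundary data $g_j$, and pass to the decreasing limit. The lower bound $u\ge\Phi_v$ is obtained just as in the paper (comparison of each global competitor $w$ against the supersolution $u_j$). The difference, and the only real gap, lies in the reverse inequality.

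You assert that $g_j\le v$ on $\partial U$ ``since $\Phi_v^\ast\le v$''. But $g_j$ \emph{decreases} to $\Phi_v^\ast$, so $g_j\ge\Phi_v^\ast$; the inequality $\Phi_v^\ast\le v$ gives no upper control on $g_j$, and you cannot compare $u_j$ with $v$ on $U$ this way. The step ``$u\le v$ on $U$'' therefore has no justification as written, and without it the glued function $\hat u$ is not known to lie below $v$. The repair is to postpone the comparison with $v$ until \emph{after} gluing: once $\hat u$ is a subsolution on all of $\Omega$ agreeing with $\Phi_v^\ast$ on $\Omega\setminus U$, apply Theorem~\ref{main 2} on $\Omega$ (using $\hat u=\Phi_v^\ast\le v$ near $\partial\Omega$ together with the assumed continuity of $v$ at boundary points) to obtain $\hat u\le v$, hence $\hat u\le\Phi_v$. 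This is precisely what the paper isolates as Proposition~\ref{prop maximal}: $\Phi_v$ is a \emph{maximal} subsolution, i.e.\ any subsolution $\tilde u\in USC(\overline U)$ with $\tilde u\le\Phi_v$ on $\partial U$ satisfies $\tilde u\le\Phi_v$ on $U$. Since your limit $u$ (extended to $\overline U$ as the decreasing limit of the continuous $u_j$) has $u|_{\partial U}=\lim g_j|_{\partial U}=\Phi_v|_{\partial U}$, maximality gives $u\le\Phi_v$ in one stroke, making the gluing detour unnecessary.

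One minor correction on the Dirichlet step: a large multiple of $|z|^2$ is a strict \emph{sub}solution (by the growth hypothesis $f(R,\dots,R)>\sup\psi$), not a supersolution, so it cannot serve as an upper barrier. The correct upper barrier on a smooth $U$ is the harmonic extension of $g_j$, which is a supersolution because $\Gamma\subset\Gamma_1$; this is how Proposition~\ref{prop existence} is arranged in the paper.
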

\begin{ackn}
	{\rm The authors would like to thank Lu Hoang Chinh for fruitful discussions on discontinuous viscosity solutions.
		This research began while the first named author was visiting Vietnam Institute for Advanced Study in Mathematics(VIASM). He would like to thank  
		the institution for the hospitality.}
\end{ackn}
\section{Preliminaries}
In this section, we recall the definitions and some properties of viscosity sub/super-solutions.
\begin{Def} (Test functions)
	Let  $ w : \Omega\longrightarrow \R$ be any function defined in $\Omega$ and $z_0 \in\Omega$ a given point. 
	An upper test function (resp., a lower test function) for $w$ at the point $z_0$ 
	is  a $C^2$-smooth function $q$ 
	in a neighbourhood of $z_0$ such that $  w (z_0) = q (z_0)$ and $w \leq q$ (resp., $w \geq q$) in a neighbourhood of $z_0$. 
\end{Def}

\begin{Def}\label{def vis}
	1. A  function  $ u \in USC(\Omega)$ is said to be a (viscosity) subsolution of
	\begin{equation}\label{Ellipticfree}
	F(Hu)=\psi(z, u),
	\end{equation}
	 in $\Omega$ if for any point $ z_0\in\Omega$ and any upper test function $q$ for $u$ at $z_0$, we have
	$F(Hq(z_0))  \geq \psi (z_0, u(z_0))$ (and then $Hq(z_0)\in \overline{M(\Gamma, n)}$).
	In this case, we also say that $F(Hu)\geq \psi (z, u)$ in the viscosity sense in $\Omega$.
	
	\medskip
	2. A  function  $ v \in LSC(\Omega)$ is said to be a (viscosity)
	supersolution of  \eqref{Ellipticfree}
	in $\Omega$ if  for any point 
	$ z_0 \in\Omega$ and any lower test  function $q$ for $v$ at $z_0$,
	 we have 
	$F(Hq(z_0))\leq\psi (z_0, u(z_0)).$
	In this case, we also say that $F(Hv)\leq \psi (z, v)$ in the viscosity sense in $\Omega$.\\
	\medskip
	3. A function $ u\in C(\Omega)$ is said to be a (viscosity) solution of  \eqref{Ellipticfree} in $\Omega$ if it is a subsolution and a supersolution of
	 \eqref{Ellipticfree} in $\Omega$.\\
	 	\medskip
	 4. A function $u\in L^{\infty}(\Omega)$ is said to be a discontinuous viscosity solution of  \eqref{Ellipticfree} in $\Omega$ if $u^*$ is a subsolution and $u_*$ is a supersolution of 
	 \eqref{Ellipticfree} in $\Omega$.
\end{Def}
It follows from the definition directly that
 if $u, v$ are viscosity subsolutions of \eqref{Ellipticfree} then $\max\{ u, v\}$
is a viscosity subsolution of \eqref{Ellipticfree}. Furthermore, we
also have:
\begin{Prop}\label{prop gluing}
	Assume that $ G\subsetneq \Omega$ is an open set. Suppose that $v$ is a viscosity subsolution of
	\eqref{Ellipticfree} in $G$ and $u$ is a viscosity subsolution of \eqref{Ellipticfree} in $\Omega$ such that
	\begin{center}
		$\limsup\limits_{G\ni z\to z_0}v(z)\leq u(z_0)$,
	\end{center}
for every $z_0\in \partial G\cap \Omega$. Then, the function
\begin{center}
	$\tilde{u}=\begin{cases}
	u\quad\mbox{in}\quad\Omega\setminus G,\\
	\max\{u, v\}\quad\mbox{in}\quad G,
	\end{cases}$
\end{center}
is a viscosity subsolution of \eqref{Ellipticfree} in $\Omega$.
\end{Prop}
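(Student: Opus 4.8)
The plan is to establish two things: that $\tilde u \in USC(\Omega)$, and that $\tilde u$ satisfies the subsolution inequality at every point of $\Omega$. The organizing observation, which makes the differential inequality almost automatic, is that $\tilde u \ge u$ on all of $\Omega$ (since $\max\{u,v\} \ge u$ on $G$ and $\tilde u = u$ off $G$), with equality at every point of $\Omega \setminus G$. I expect the upper semicontinuity check along $\partial G \cap \Omega$ to be the only place where real care is needed, and it is exactly there that the boundary hypothesis on the $\limsup$ is consumed.

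For upper semicontinuity I would argue pointwise. On $G$ the function $\tilde u = \max\{u,v\}$ is the maximum of the two functions $u|_G$ and $v$, both of which lie in $USC(G)$ (recall $v$ is a subsolution on $G$, hence $USC(G)$), so $\tilde u$ is $USC$ there; on the open set $\Omega \setminus \overline G$ one has $\tilde u = u \in USC$. The delicate points are $z_0 \in \partial G \cap \Omega$, where $\tilde u(z_0) = u(z_0)$. Taking any sequence $z_k \to z_0$ in $\Omega$ and splitting it according to whether $z_k \in G$ or $z_k \notin G$, I would bound the contribution of the points outside $G$ by $\limsup_k u(z_k) \le u(z_0)$ (upper semicontinuity of $u$ on $\Omega$), and the contribution of the points in $G$ by $\limsup_k \max\{u(z_k),v(z_k)\} \le \max\{\limsup_k u(z_k),\, \limsup_{G\ni z\to z_0} v(z)\} \le u(z_0)$, using upper semicontinuity of $u$ for the first term and the hypothesis $\limsup_{G\ni z\to z_0} v(z) \le u(z_0)$ for the second. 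Combining the two families gives $\limsup_{z\to z_0}\tilde u(z) \le u(z_0) = \tilde u(z_0)$, so $\tilde u \in USC(\Omega)$.

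For the subsolution property I would distinguish two cases according to whether $z_0 \in G$. If $z_0 \in G$, then since $G$ is open we have $\tilde u = \max\{u,v\}$ in a neighbourhood of $z_0$; both $u$ (by restriction from $\Omega$) and $v$ are subsolutions of \eqref{Ellipticfree} on $G$, so by the already-recorded fact that the maximum of two subsolutions is a subsolution, any upper test function $q$ for $\tilde u$ at $z_0$ satisfies $F(Hq(z_0)) \ge \psi(z_0,\tilde u(z_0))$. If instead $z_0 \in \Omega \setminus G$, then $\tilde u(z_0) = u(z_0)$, and for any upper test function $q$ for $\tilde u$ at $z_0$ we have $q \ge \tilde u \ge u$ near $z_0$ with $q(z_0) = u(z_0)$, so $q$ is automatically an upper test function for $u$ at $z_0$; since $u$ is a subsolution of \eqref{Ellipticfree} on $\Omega$, this gives $F(Hq(z_0)) \ge \psi(z_0, u(z_0)) = \psi(z_0,\tilde u(z_0))$.

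The point worth stressing is that this second case handles the interior of $\Omega \setminus G$ and the boundary points $z_0 \in \partial G \cap \Omega$ simultaneously, because the comparison $\tilde u \ge u$ (with equality at $z_0$) is all that is used — no separate test-function analysis along $\partial G$ is required for the differential inequality. Consequently the genuine content of the proof is concentrated entirely in the semicontinuity step at $\partial G \cap \Omega$, which is precisely where the boundary hypothesis is needed; once $\tilde u \in USC(\Omega)$ is secured, the viscosity inequality follows from the two elementary cases above.
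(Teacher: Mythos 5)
Your proof is correct and complete. The paper actually states Proposition \ref{prop gluing} without proof (treating it, like the closure of subsolutions under $\max$, as a direct consequence of the definition), and your argument --- upper semicontinuity via the $\limsup$ hypothesis at $\partial G\cap\Omega$, the subsolution inequality on $G$ from the max of two subsolutions, and on $\Omega\setminus G$ from the comparison $\tilde u\ge u$ with equality at the test point --- is exactly the standard verification the authors are implicitly relying on.
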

In the case of Hessian equations (i.e., $f(x)=(\sigma_k(x))^{1/k}$ and $\Gamma=\Gamma_k$), we use the notation
$F_k$ instead of $F$. The following result has been proved in \cite{Lu}:
\begin{Prop}
Let $u\in USC(\Omega)$. Then the following conditions are equivalent:
\begin{itemize}
	\item [a)] $u$ is a viscosity subsolution of the equation $F_k(Hw)=\psi (z, w)$ in the sense of Definition \ref{def vis};
	\item [b)] for every $z_0\in\Omega$, for every upper test function $q$ for $u$ at $z_0$, we have
 $\sigma_k(\lambda(Hq(z_0))\geq \psi^k (z_0, u(z_0))$ (it does not require that
  $Hq(z_0)\in \overline{M(\Gamma_k, n)}$).
\end{itemize}
\end{Prop}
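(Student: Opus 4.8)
The implication (a) $\Rightarrow$ (b) is immediate: if $q$ is an upper test function for $u$ at $z_0$, then by Definition \ref{def vis} we have $F_k(Hq(z_0))\geq\psi(z_0,u(z_0))\geq 0$, which already forces $Hq(z_0)\in\overline{M(\Gamma_k,n)}$ and hence $\sigma_k(\lambda(Hq(z_0)))=\big(F_k(Hq(z_0))\big)^k\geq\psi^k(z_0,u(z_0))$; dropping the cone membership gives (b). The entire content of the statement is therefore the converse (b) $\Rightarrow$ (a), and its only nontrivial point is that the scalar inequality in (b), when imposed at \emph{every} test function, already forces $Hq(z_0)\in\overline{M(\Gamma_k,n)}$, so that $F_k(Hq(z_0))$ is finite and equals $\sigma_k(\lambda(Hq(z_0)))^{1/k}\geq\psi(z_0,u(z_0))$.

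The plan is to argue by contradiction through a positive-definite perturbation of the test function. Suppose (b) holds but some upper test function $q$ for $u$ at $z_0$ has $A:=Hq(z_0)$ with $\lambda(A)\notin\overline{\Gamma_k}$. For any positive semidefinite Hermitian matrix $P$, the function $\tilde q(z)=q(z)+\langle P(z-z_0),(z-z_0)\rangle$ satisfies $\tilde q\geq q\geq u$ near $z_0$ with equality at $z_0$, so it is again an upper test function for $u$ at $z_0$, and its complex Hessian at $z_0$ is $A+P$. Hence (b) applied to $\tilde q$ yields $\sigma_k(\lambda(A+P))\geq\psi^k(z_0,u(z_0))\geq 0$ for \emph{every} $P\geq 0$. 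The contradiction will come from the purely algebraic fact that \emph{if $\lambda(A)\notin\overline{\Gamma_k}$, then there exists a positive semidefinite $P$ with $\sigma_k(\lambda(A+P))<0$}. Granting this, we get $\sigma_k(\lambda(A+P))<0\leq\psi^k(z_0,u(z_0))$, contradicting (b); thus every test-function Hessian lies in $\overline{M(\Gamma_k,n)}$, and combining this membership with the inequality in (b) gives exactly (a).

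It remains to prove the algebraic fact, which I regard as the main obstacle. Diagonalizing $A=U\,\mathrm{diag}(\lambda(A))\,U^{*}$ and choosing $P=U\,\mathrm{diag}(p)\,U^{*}$ with $p\geq 0$ reduces it to a statement about the eigenvalues: if $\lambda\notin\overline{\Gamma_k}$, then one can increase the coordinates $\lambda_i\mapsto\lambda_i+p_i$ ($p_i\geq 0$) so as to make $\sigma_k$ negative. Here I would use the Gårding-cone structure of $\Gamma_k$. First, since the closed positive orthant is contained in $\overline{\Gamma_k}$, the smallest entry of $\lambda$ must be strictly negative. Next, I would exploit that $\sigma_k$ is affine in each variable and that, letting the $k-1$ largest coordinates tend to $+\infty$, the coefficient of the top power is precisely the tail sum of the remaining $n-k+1$ coordinates; when this tail sum is negative, this drives $\sigma_k\to-\infty$ and produces the desired $P$.

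The delicate remaining case is when the tail sum is nonnegative. There I expect one can show that $\sigma_k(\lambda)<0$ already holds, so that $P=0$ works: concretely, one uses the characterization $\overline{\Gamma_k}=\{\sigma_1\geq 0,\dots,\sigma_k\geq 0\}$ of the closed Gårding cone together with the recursion $\sigma_k(x)=x_i\,\sigma_{k-1}(x\mid i)+\sigma_k(x\mid i)$, the cleanest implementation being an induction on $n$ (and $k$); equivalently, the hyperbolicity of $\sigma_k$ with respect to $\mathbf 1$, in the form $x\in\overline{\Gamma_k}\iff\sigma_k(x+s\mathbf 1)>0$ for all $s>0$, immediately yields an $s_0>0$ with $\sigma_k(\lambda+s_0\mathbf 1)\leq 0$, and the only borderline situation (a tangential crossing where this value is exactly $0$) is removed by increasing a single coordinate $\lambda_i$ for which $\sigma_{k-1}(\cdot\mid i)<0$. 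This bookkeeping around the cone boundary is the one genuinely technical step; once the algebraic fact is established, the Proposition follows at once from the perturbation argument above.
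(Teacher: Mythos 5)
Your overall strategy is the right one, and it is in fact the same one the paper uses: the paper gives no independent argument here, it simply invokes the proof of Lemma 3.7 in \cite{Lu}, which is precisely your perturbation scheme (add $\langle P(z-z_0),(z-z_0)\rangle$ with $P\geq 0$ to the test function, so that (b) forces $\sigma_k(\lambda(A+P))\geq 0$ for every $P\geq 0$, and then conclude $\lambda(A)\in\overline{\Gamma_k}$ from a purely algebraic lemma). The reduction of (b)$\Rightarrow$(a) to the algebraic fact, and the diagonalization step reducing matrices to eigenvalue vectors, are correct.

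The gap is in your proof of the algebraic fact itself. Your first case (tail sum $\sum_{j\geq k}\lambda_j<0$, send the $k-1$ largest coordinates to $+\infty$) is fine. But in the remaining case you merely assert that $\sigma_k(\lambda)<0$ must already hold, and neither of the two justifications you sketch closes this. The hyperbolicity route only gives some $s_0>0$ with $\sigma_k(\lambda+s_0\mathbf 1)\leq 0$ --- which is a statement about $\lambda+s_0\mathbf 1$, not about $P=0$ --- and your proposed repair of the tangential case fails: at a point $\mu$ with $\sigma_k(\mu)=0$ lying on $\partial\Gamma_k$ one has $\sigma_{k-1}(\mu\mid i)\geq 0$ for \emph{every} $i$, so there need not exist a coordinate whose increase pushes $\sigma_k$ negative (already for $n=k=2$ and $\lambda=(-1,-1)$ the line $\lambda+s\mathbf 1$ gives $(s-1)^2\geq 0$ for all $s$, and at the double root $\mu=(0,0)$ both partials vanish; one must leave the diagonal, e.g.\ $P=\mathrm{diag}(2,0)$). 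The clean argument --- essentially the one in \cite{Lu} --- is the induction you mention only in passing: from $\sigma_k(\lambda+p)\geq 0$ for all $p\geq 0$, fix $i$, write $\sigma_k(\lambda+p'+te_i)=(\lambda_i+t)\sigma_{k-1}((\lambda+p')\mid i)+\sigma_k((\lambda+p')\mid i)$ and let $t\to\infty$ to get $\sigma_{k-1}((\lambda+p')\mid i)\geq 0$ for all $p'\geq 0$; by induction $\lambda\mid i\in\overline{\Gamma_{k-1}}$ for every $i$, whence $\sigma_j(\lambda)=\frac{1}{n-j}\sum_i\sigma_j(\lambda\mid i)\geq 0$ for $j\leq k-1$, and $\sigma_k(\lambda)\geq 0$ by taking $p=0$. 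With that lemma in place your argument is complete; as written, the case analysis does not prove it.
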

Actually, in \cite{Lu}, a function $u\in USC(\Omega)$ is called a viscosity subsolution of the equation
$\sigma_k(\lambda(Hw))=\psi^k (z, w)$ if it satisfies the condition b) in the above proposition. By the proof of \cite[Lemma 3.7]{Lu}, if b) is satisfied
 then, for every $z_0\in\Omega$ and for every upper test function $q$ for $u$ at $z_0$,
  the Hessian matrix $Hq(z_0)=(\frac{\partial^2q}{\partial z_{\alpha}\partial\overline{z_{\beta}}}(z_0))$ is $k$-positive, i.e.,
 	$Hq(z_0)\in  \overline{M(\Gamma_k, n)}$. Then $b) \Rightarrow a)$. The fact $a)\Rightarrow b)$ is obvious.\\
 
	If $u\in USC(\Omega)$ (resp., $u\in LSC(\Omega)$)
then for every $x\in\Omega$, the set
	\begin{center}
		$\Jj^{\pm}u(z)=\{( Dw(z), Hw(z))\in \R^{2n}\times\mathcal{H}^n: w$ is 
		an upper test function (resp. a lower test function) for $u$ at $z \}$
	\end{center}
	is called the super-(resp., sub-)differential of $u$ at $z$. The set
	\begin{center}
		$\overline{\Jj}^{\pm}u(z)=\{(p, Z)\in\R^{2n}\times\mathcal{H}^n: \exists z_m\rightarrow z$
		and $(p_m, Z_m)\in\Jj^{\pm}u(z_m)$ such that $(p_m, Z_m)\rightarrow (p, Z)$ and
		$u(z_m)\rightarrow u(z)  \}$
	\end{center}
is called the limiting	super-(resp. sub-)differential of $u$ at $z$.
By the continuity of $F$ and $\psi$, the limiting super/sub-differentials can be used to identify
 viscosity sub/super-solutions  as follows:
\begin{Prop}\label{prop def3 vis}
	a) Let $u\in USC(\Omega)$. Then $u$ is a viscosity subsolution of the equation
	\eqref{Ellipticfree} iff  for any point $x\in\Omega$, for every $(p, X)\in \overline{\Jj}^{+}u(z)$, we have
	$$
	F(X)  \geq\psi (z, u).
	$$
	b) Let  $u\in LSC(\Omega)$. Then $u$ is a viscosity  supersolution of the equation
	\eqref{Ellipticfree} iff  for any point $ x\in\Omega$, for every $(p, X)\in \overline{\Jj}^{-}u(z)$, we have
	$$
	F(X)  \leq \psi (z, u).
	$$
\end{Prop}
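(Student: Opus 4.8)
The plan is to reduce both statements to the defining property of sub/super-solutions together with a single limiting argument. The two ``if'' implications are immediate: taking the constant sequences $z_m\equiv z$ and $(p_m,X_m)\equiv(p,X)$ in the definition of the limiting differential shows $\Jj^{\pm}u(z)\subseteq\overline{\Jj}^{\pm}u(z)$, so any inequality imposed on the whole limiting differential holds in particular on $\Jj^{\pm}u(z)$, and the latter is exactly the set of jets $(Dq(z),Hq(z))$ coming from upper (resp.\ lower) test functions $q$. Thus the condition on $\overline{\Jj}^{\pm}u(z)$ reduces to Definition \ref{def vis}. Hence only the ``only if'' directions require work, and the heart of the matter is passing to the limit in the inequalities $F(X_m)\geq\psi(z_m,u(z_m))$ (resp.\ $\leq$) guaranteed at the approximating points.

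For part a), I would suppose $u$ is a subsolution and fix $(p,X)\in\overline{\Jj}^{+}u(z)$, choosing $z_m\to z$ and $(p_m,X_m)\in\Jj^{+}u(z_m)$ with $(p_m,X_m)\to(p,X)$ and $u(z_m)\to u(z)$. Since each $(p_m,X_m)$ arises from an upper test function, the subsolution property gives $F(X_m)\geq\psi(z_m,u(z_m))\geq 0$, the last step using $\psi\geq 0$. In particular $F(X_m)>-\infty$, so $X_m\in\overline{M(\Gamma,n)}$; as this set is closed and $X_m\to X$, we get $X\in\overline{M(\Gamma,n)}$. On $\overline{M(\Gamma,n)}$ we have $F=f\circ\lambda$, which is continuous, since the eigenvalues of a Hermitian matrix depend continuously on the matrix and $f$ is continuous on $\overline{\Gamma}$. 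Hence $F(X_m)\to F(X)$, and combining with the continuity of $\psi$ together with $u(z_m)\to u(z)$ yields $F(X)=\lim_m F(X_m)\geq\lim_m\psi(z_m,u(z_m))=\psi(z,u(z))$, as required.

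For part b) I would argue analogously, fixing $(p,X)\in\overline{\Jj}^{-}v(z)$ with approximating data as above so that $F(X_m)\leq\psi(z_m,v(z_m))$. The only genuine difficulty is that $F$ is merely upper semicontinuous across $\partial M(\Gamma,n)$, dropping to $-\infty$ outside $\overline{M(\Gamma,n)}$, so one cannot pass to the limit quite as directly. I would therefore split into cases according to the position of $X$. If $X\notin\overline{M(\Gamma,n)}$ then $F(X)=-\infty\leq\psi(z,v(z))$ trivially. If $X$ lies in the open set $M(\Gamma,n)$, then $X_m\in M(\Gamma,n)$ for large $m$, $F$ is continuous there, and passing to the limit gives $F(X)\leq\psi(z,v(z))$. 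The remaining case $X\in\partial M(\Gamma,n)$ is where $\lambda(X)\in\partial\Gamma$, so $F(X)=f(\lambda(X))=0$, and the desired bound $F(X)=0\leq\psi(z,v(z))$ is exactly the sign hypothesis $\psi\geq 0$.

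The step I expect to be the main obstacle is precisely this boundary case in part b): it is the one point where the continuity of $F$ fails, and the argument hinges on observing that there $F(X)=0$ and invoking $\psi\geq 0$, rather than on any limiting identity. Everywhere else the proposition follows from the routine interplay between the inclusion $\Jj^{\pm}\subseteq\overline{\Jj}^{\pm}$ and the continuity of $F$ on $\overline{M(\Gamma,n)}$ and of $\psi$ on $\overline{\Omega}\times\R$.
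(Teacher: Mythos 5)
Your proof is correct. The paper does not actually write out a proof of this proposition; it only offers the one-line justification ``by the continuity of $F$ and $\psi$'' before the statement, leaving the standard limiting argument implicit. Your write-up supplies exactly that argument (the inclusion $\Jj^{\pm}u(z)\subseteq\overline{\Jj}^{\pm}u(z)$ for the trivial directions, and passage to the limit along the approximating jets for the substantive ones), but it is in fact more careful than the paper's stated justification: $F$ is \emph{not} continuous on all of $\mathcal{H}^n$, since it drops to $-\infty$ outside the closed set $\overline{M(\Gamma,n)}$, so the naive limit fails for supersolutions when $X\in\overline{M(\Gamma,n)}\setminus M(\Gamma,n)$ and the $X_m$ approach from outside. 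Your three-way case split in part b), resolving the boundary case by $F(X)=f(\lambda(X))=0\leq\psi$ via the hypotheses $f|_{\partial\Gamma}=0$ and $\psi\geq 0$, is precisely the point the paper glosses over, and your observation in part a) that $F(X_m)\geq\psi\geq 0$ forces $X_m\in\overline{M(\Gamma,n)}$ (whence $X\in\overline{M(\Gamma,n)}$ and continuity of $f\circ\lambda$ applies) is the right way to make the subsolution direction rigorous. No gaps.
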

The following proposition is deduced by combining Proposition \ref{prop def3 vis} and
\cite[Proposition 4.3]{CIL92}:
\begin{Prop}\label{prop liminf limsup}
a) Assume that $\{u_{\alpha}\}$ is a family of viscosity subsolutions of the equation 
\begin{equation}\label{eq prop liminf limsup}
F(Hw)=\psi (z, w),
\end{equation}
in $\Omega$. If $u=\sup_{\alpha}u_{\alpha}$ is locally bounded from above then its usc regularization $u^*$
is a viscosity subsolution of \eqref{eq prop liminf limsup} in $\Omega$.\\
b) Assume that $\{u_{\alpha}\}$ is a family of viscosity supersolutions of \eqref{eq prop liminf limsup}
in $\Omega$. If $u=\inf_{\alpha}u_{\alpha}$ is locally bounded from below then its lsc regularization $u_*$
is a viscosity supersolution of \eqref{eq prop liminf limsup} in $\Omega$.\\
b)  Assume that $u_j$ is a decreasing (resp., increasing)
 sequence of viscosity subsolutions (resp., supersolutions) to \eqref{eq prop liminf limsup}. 
Then $u=\lim_{j\to\infty}u_j$ is either a viscosity subsolutions (resp., supersolutions)
 to \eqref{eq prop liminf limsup} or
identically $-\infty$ (resp., $\infty$).
\end{Prop}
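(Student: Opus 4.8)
The plan is to treat all three assertions as instances of a single mechanism. By Proposition \ref{prop def3 vis}, being a subsolution (resp.\ supersolution) of \eqref{eq prop liminf limsup} is equivalent to the pointwise inequality $F(X)\geq\psi(z,u(z))$ for all $(p,X)\in\overline{\Jj}^{+}u(z)$ (resp.\ $F(X)\leq\psi(z,u(z))$ for all $(p,X)\in\overline{\Jj}^{-}u(z)$), so in each part it suffices to understand how the limiting jets of an envelope or of a monotone limit are built from the jets of the members of the family. That bookkeeping is precisely what \cite[Proposition 4.3]{CIL92} supplies. For part a) I would set $w=u^{*}$ with $u=\sup_{\alpha}u_{\alpha}$; then $w\in USC(\Omega)$ and is locally bounded above. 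Fixing $z\in\Omega$ and $(p,X)\in\overline{\Jj}^{+}w(z)$, the content of \cite[Proposition 4.3]{CIL92} is that, because $w$ is the usc-regularized supremum, there exist indices $\alpha_m$, points $z_m\to z$, and $(p_m,X_m)\in\overline{\Jj}^{+}u_{\alpha_m}(z_m)$ with $(p_m,X_m)\to(p,X)$ and, crucially, $u_{\alpha_m}(z_m)\to w(z)$. Since each $u_{\alpha_m}$ is a subsolution, Proposition \ref{prop def3 vis}a) gives $F(X_m)\geq\psi(z_m,u_{\alpha_m}(z_m))$; letting $m\to\infty$ and using continuity of $F$ on $\mathcal{H}^n$ and of $\psi$ yields $F(X)\geq\psi(z,w(z))$, which is the claim.

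Part b) for $u=\inf_{\alpha}u_{\alpha}$ is the mirror image: $u_{*}\in LSC(\Omega)$ is locally bounded below, one works with $\overline{\Jj}^{-}u_{*}(z)$, extracts approximating subjets $(p_m,X_m)\in\overline{\Jj}^{-}u_{\alpha_m}(z_m)$ with $u_{\alpha_m}(z_m)\to u_{*}(z)$, and passes to the limit in $F(X_m)\leq\psi(z_m,u_{\alpha_m}(z_m))$. The only asymmetry is the direction of the inequality, which the continuity of $F$ and $\psi$ transmits to the limit just as well.

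For the monotone statement I would first note that a decreasing limit $u=\lim_j u_j=\inf_j u_j$ of subsolutions is automatically upper semicontinuous, since $\{u<c\}=\bigcup_j\{u_j<c\}$ is open. To see it is a subsolution I would use either of two routes. Route (i), direct: given an upper test function $q$ for $u$ at $z_0$, normalize so that $u-q$ has a \emph{strict} local maximum at $z_0$; then the maximizers $z_j$ of $u_j-q$ over a small closed ball satisfy $z_j\to z_0$ and $u_j(z_j)\to u(z_0)$, and passing to the limit in $F(Hq(z_j))\geq\psi(z_j,u_j(z_j))$ gives the required inequality at $z_0$. Route (ii): apply \cite[Proposition 4.3]{CIL92} to the decreasing sequence exactly as in part a). Either way the subsolution inequality holds at every point where $u>-\infty$. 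Finally, since $\Gamma\subset\Gamma_1$, every $u_j$ is subharmonic, so $u$ is a decreasing limit of subharmonic functions on the domain $\Omega$; by the classical potential-theoretic dichotomy it is either subharmonic—hence, with the jet condition just verified, a genuine subsolution—or identically $-\infty$. The increasing sequence of supersolutions is handled symmetrically with $\sup$, lower semicontinuity, and the value $+\infty$.

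The hard part will be the value-matching built into \cite[Proposition 4.3]{CIL92}: producing approximating jets of the individual members along which not only $(p_m,X_m)\to(p,X)$ but also $u_{\alpha_m}(z_m)$ converges to the \emph{envelope} value $w(z)$ (resp.\ $u_{*}(z)$), so that $\psi(z_m,u_{\alpha_m}(z_m))$ converges to $\psi$ evaluated at the correct height. Without this, one would only control $\psi$ at a strictly smaller argument and lose the inequality for subsolutions (and dually for supersolutions). In the monotone case the same difficulty reappears as the convergence $u_j(z_j)\to u(z_0)$ of the values at the perturbed maximizers, which is exactly what the strict-maximum normalization secures. By contrast, the $-\infty$ alternative in the monotone statement is a soft consequence of subharmonicity and not of the viscosity machinery.
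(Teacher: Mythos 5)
Your argument is correct and is essentially the paper's own: the proposition is stated there as an immediate consequence of combining Proposition \ref{prop def3 vis} with \cite[Proposition 4.3]{CIL92}, which is exactly the jet-approximation-with-value-matching mechanism you elaborate (including for the monotone case). The only imprecision is the phrase ``continuity of $F$ on $\mathcal{H}^n$'': $F$ is continuous only on $\overline{M(\Gamma, n)}$ and equals $-\infty$ elsewhere, but the limit passage still goes through because the subsolution inequality forces $X_m\in\overline{M(\Gamma, n)}$ (a closed set), while for supersolutions the desired inequality is trivial whenever $F(X)\leq 0\leq\psi$.
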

\section{Approximation of subsolutions}
In this section, we will prove the Theorem \ref{main 1} and Corollary \ref{cor1 main1}.
 First, we have the following lemma:
\begin{Lem}\label{lem vis sh}
	There exists a mapping
	\begin{center}
		$\Phi:  M(\Gamma, n)\rightarrow M(\Gamma_n, n)$\\
		$H\mapsto\widetilde{H}=\Phi (H)$	
	\end{center}
	depending on $F$ such that 
	\begin{itemize}
		\item [a)] For all $B\in \overline{M(\Gamma, n)}$,
		\begin{equation}
		F(B)=\inf\{\Delta_{\widetilde{H}}B+F(H)-\Delta_{\widetilde{H}}H: H\in M(\Gamma, n) \},
		\end{equation}
		where $\Delta_{\widetilde{H}}B=trace(\widetilde{H}B)=\sum\limits_{j,k=1}^n\widetilde{h}_{jk}b_{kj}$.
		\item [b)] For all $B\in \mathcal{H}^n$, if
		\begin{equation}\label{eq0 lem vis sh}
		\inf\{\Delta_{\widetilde{H}}B+F(H)-\Delta_{\widetilde{H}}H: H\in M(\Gamma, n) \}\geq 0,
		\end{equation}
		then $B\in\overline{M(\Gamma, n)}$.
	\end{itemize}
\end{Lem}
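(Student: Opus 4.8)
The plan is to realize $F$ as the infimum of its affine supporting functions (tangent planes) on the convex set $\overline{M(\Gamma,n)}$, using the concavity of $F$. Concretely, for $H\in M(\Gamma,n)$ I would let $\widetilde H=\Phi(H)$ be a \emph{supergradient} of $F$ at $H$, i.e.\ a Hermitian matrix with
$F(B)\le F(H)+\operatorname{trace}(\widetilde H(B-H))$ for all $B\in\mathcal H^n$ (this exists since $F$ is concave and finite on the open set $M(\Gamma,n)$, and one may take $\widetilde H=V\operatorname{diag}(p)V^\ast$ with $H=V\operatorname{diag}(\lambda(H))V^\ast$ and $p$ a supergradient of $f$ at $\lambda(H)$; where $F$ is differentiable this is just $DF(H)$). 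Since $f$ is strictly increasing in each variable, each such $p$ has strictly positive entries, so $\widetilde H$ is positive definite; as $\Gamma_n$ is the positive orthant, $M(\Gamma_n,n)$ is exactly the cone of positive definite Hermitian matrices, giving $\Phi(H)\in M(\Gamma_n,n)$. Writing $L_H(B)=\Delta_{\widetilde H}B+F(H)-\Delta_{\widetilde H}H=F(H)+\operatorname{trace}(\widetilde H(B-H))$, the supergradient inequality is precisely $L_H(B)\ge F(B)$.

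For part (a), the inequality $L_H(B)\ge F(B)$ immediately yields $\inf_{H}L_H(B)\ge F(B)$, so only the reverse remains. If $B\in M(\Gamma,n)$ I simply take $H=B$, for which $L_B(B)=F(B)$. If $B\in\partial M(\Gamma,n)$, I would approximate by $H_\epsilon=B+\epsilon I$; the line-segment principle for the convex cone $\Gamma$ (together with $\mathbf 1\in\Gamma_n\subseteq\Gamma$) gives $\lambda(B)+\epsilon\mathbf 1\in\Gamma$, so $H_\epsilon\in M(\Gamma,n)$. Then $L_{H_\epsilon}(B)=F(B+\epsilon I)-\epsilon\operatorname{trace}(\widetilde H_\epsilon)$, and choosing $\widetilde H_\epsilon$ co-diagonal with $B$ I get $\operatorname{trace}(\widetilde H_\epsilon)=p\cdot\mathbf 1$ for some $p\in\partial f(\lambda(B)+\epsilon\mathbf 1)$. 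The supergradient inequality at $\lambda(B)+\epsilon\mathbf 1$ evaluated at $\lambda(B)$ gives $0\le\epsilon\,(p\cdot\mathbf 1)\le f(\lambda(B)+\epsilon\mathbf 1)-f(\lambda(B))\to0$, so $\epsilon\operatorname{trace}(\widetilde H_\epsilon)\to0$ and $L_{H_\epsilon}(B)\to F(B)$ by continuity of $F$ on $\overline{M(\Gamma,n)}$. This proves the infimum equals $F(B)$.

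For part (b) I argue by contraposition: assuming $B\notin\overline{M(\Gamma,n)}$, i.e.\ $\mu:=\lambda(B)\notin\overline\Gamma$, I must exhibit $H$ with $L_H(B)<0$. First I reduce to a scalar problem on $\Gamma$: restricting to $H=V\operatorname{diag}(\eta)V^\ast$ co-diagonal with $B$ (where $B=V\operatorname{diag}(\mu)V^\ast$) and the corresponding co-diagonal supergradient, a direct trace computation gives $L_H(B)=f(\eta)+p_\eta\cdot(\mu-\eta)$, so it suffices to make $f(\eta)+p_\eta\cdot(\mu-\eta)<0$ for some $\eta\in\Gamma$. Since $\overline\Gamma$ is a closed convex cone with $\overline{\Gamma_n}\subseteq\overline\Gamma\subseteq\overline{\Gamma_1}$, the separation (bipolar) theorem provides $a\in\overline\Gamma^\ast$ with $a\cdot\mu<0$, and $a$ necessarily has nonnegative entries. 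Along a ray $\eta=r\eta^\ast$ with $r\to0^+$ one checks, exactly as in part (a), that the affine intercept $\beta(r\eta^\ast)=f(r\eta^\ast)-p\cdot(r\eta^\ast)$ tends to $0$ (by concavity and $f(0)=0$), so that $L_H(B)\approx p_{r\eta^\ast}\cdot\mu$; driving this negative amounts to choosing the approach so that the supergradient direction detects $a$.

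The main obstacle is precisely this last point: I must show that the supergradient directions $\{p_\eta/|p_\eta|:\eta\in\Gamma\}$ (and their limits as $\eta\to\partial\Gamma$) are dense enough in $\overline\Gamma^\ast$ to separate any $\mu\notin\overline\Gamma$. Equivalently, one needs $\bigcap_{\eta\in\Gamma}\{x:f(\eta)+p_\eta\cdot(x-\eta)\ge0\}=\overline\Gamma$, i.e.\ that the tangent half-spaces of $f$, which all contain $\overline\Gamma$ and whose normals lie in $\overline\Gamma^\ast$ with intercepts $\beta(\eta)\ge0$, homogenize in the limit to recover the cone via the bipolar theorem. This is where the hypotheses $f|_{\partial\Gamma}=0$ and strict monotonicity are essential (the former forces $\beta(\eta)\to0$ along boundary-approaching rays, the latter keeps the normals in the interior of $\overline\Gamma^\ast$), and it is the step where I would have to handle the possible blow-up of $p_\eta$ and the non-smoothness of $\partial\Gamma$ carefully, working throughout with superdifferentials rather than assuming $f\in C^1$.
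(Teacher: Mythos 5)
Your part (a) is essentially the paper's argument: take $\widetilde H$ to be a supergradient of the concave function $F$ at $H$, observe that strict monotonicity forces $\Delta_{\widetilde H}N>0$ for positive semidefinite $N\neq 0$ (so $\widetilde H\in M(\Gamma_n,n)$), and obtain the reverse inequality in the infimum by letting $H\to B$; your estimate $0\le\epsilon\,\Delta_{\widetilde{H_\epsilon}}I\le F(B+\epsilon I)-F(B)\to 0$ along $H_\epsilon=B+\epsilon I$ is in fact a cleaner justification of the limit \eqref{eq1' lem vis sh} than the paper supplies. (The only point you should make explicit is that the co-diagonal matrix $V\operatorname{diag}(p)V^\ast$ really is a supergradient of the spectral function $F$, which needs a Davis--Lewis type argument; the paper avoids this by working directly with $\partial(-F)$ on $\mathcal H^n$.)

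Part (b), however, contains a genuine gap, and you have correctly located it yourself: after reducing to the scalar problem you need $\bigcap_{\eta\in\Gamma}\{x: f(\eta)+p_\eta\cdot(x-\eta)\ge 0\}=\overline\Gamma$, i.e.\ that the supergradient normals $p_\eta$ (after normalization, and in the limit $\eta\to\partial\Gamma$) separate every $\mu\notin\overline\Gamma$ from $\overline\Gamma$. The bipolar theorem gives you a separating functional $a\in\overline\Gamma^\ast$, but nothing in your argument produces a sequence $\eta_j$ whose supergradients align with $a$; the normals could in principle cluster on a proper subcone of $\overline\Gamma^\ast$, and handling the possible blow-up of $|p_\eta|$ near $\partial\Gamma$ is exactly the unresolved issue. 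The paper avoids this global duality question entirely with a one-dimensional argument: choose $t_0>0$ with $B+t_0I\in\partial M(\Gamma,n)$ and test the hypothesis \eqref{eq0 lem vis sh} only with $H=B+tI$, $t>t_0$. This yields $\Delta_{\widetilde{B+tI}}I\le F(B+tI)/t\to F(B+t_0I)/t_0=0$ (using $f|_{\partial\Gamma}=0$ and, crucially, $t_0>0$), while the supergradient inequality applied to $B+(1+t_0)I$ forces $\liminf_{t\to t_0^+}\Delta_{\widetilde{B+tI}}I\ge F(B+(1+t_0)I)>0$ --- a contradiction. You should either adopt this ray argument or supply a proof of the density-of-normals claim; as written, your part (b) is a plausible strategy but not a proof.
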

\begin{proof}
	a) By the concavity of $F$ in $ M(\Gamma, n)$, for every $H\in M(\Gamma, n)$, the subdifferential
	 $\partial (-F(H))$ is nonempty, i.e., there exists $\tilde{H}\in\mathcal{H}^n\setminus\{0\}$ such that
	\begin{equation}\label{eq1 lem vis sh}
	F(B)-F(H)\leq \Delta_{\tilde{H}}(B-H)=\Delta_{\tilde{H}}(B)-\Delta_{\tilde{H}}(H),
	\end{equation}
	for all $B\in \overline{M(\Gamma, n)}$. Moreover,
	\begin{equation}\label{eq1' lem vis sh}
		F(B)=\lim\limits_{H\rightarrow B}(F(H)+\Delta_{\tilde{H}}(B-H)).
	\end{equation}
	Combining \eqref{eq1 lem vis sh} and \eqref{eq1' lem vis sh}, we have
	\begin{equation}\label{eq2 lem vis sh}
	F(B)=\inf\{\Delta_{\tilde{H}}(B)+F(H)-\Delta_{\tilde{H}}(H): H\in M(\Gamma, n) \}.
	\end{equation}
	By \eqref{F is increasing} and \eqref{eq1 lem vis sh}, for every  $N\in \overline{M(\Gamma_n, n)}\setminus\{0\}$ and for each $H\in  M(\Gamma, n)$, we have
	\begin{equation}\label{eq3 lem vis sh}
	\Delta_{\tilde{H}}(N)=\Delta_{\tilde{H}}(H+N-H)\geq F(H+N)-F(H)>0.
	\end{equation}
	Hence,  $\widetilde{H}\in M(\Gamma_n, n)$ for every $H\in  M(\Gamma, n)$.\\
	b) Assume that $B\notin\overline{M(\Gamma, n)}$ and the condition \eqref{eq0 lem vis sh} is satisfied.
	Let $t_0>0$ such that $B+t_0I\in\partial M(\Gamma, n)$. Then, for every $t>t_0$, 
	we have $B+tI\in M(\Gamma, n)$. By the assumption, we get
	\begin{center}
	$\Delta_{\widetilde{B+tI}}B+F(B+tI)-\Delta_{\widetilde{B+tI}}(B+tI)\geq
	\inf\limits_{H\in M(\Gamma, n)}\{\Delta_{\widetilde{H}}B+F(H)-\Delta_{\widetilde{H}}H\}\geq 0, $	
	\end{center}
for every $t>t_0$. Then
	\begin{center}
		$F(B+tI)-t\Delta_{\widetilde{B+tI}}I\geq 0,$
	\end{center}
	for every $t>t_0$. Letting $t\searrow t_0$, we get
	\begin{equation}\label{eq4 lem vis sh}
	\limsup\limits_{t\to t_0^+}\Delta_{\widetilde{B+tI}}I\leq \lim\limits_{t\to t_0^+}\dfrac{F(B+tI)}{t}
	=\dfrac{F(B+t_0I)}{t_0}=0.
	\end{equation}
	Moreover, it follows from \eqref{eq1 lem vis sh} that
\begin{center}
	$F(B+(1+t_0)I)-F(B+tI)\leq (1+t_0-t)\Delta_{\widetilde{B+tI}}I,$
\end{center}
for every $t_0<t<t_0+1$. Letting $t\searrow t_0$, we get
\begin{equation}\label{eq5 lem vis sh}
	\liminf\limits_{t\to t_0^+}\Delta_{\widetilde{B+tI}}I\geq
	 \lim\limits_{t\to t_0^+}\dfrac{F(B+(1+t_0)I)-F(B+tI)}{1+t_0-t}=F(B+(1+t_0)I)>0.
\end{equation}
	By \eqref{eq4 lem vis sh} and \eqref{eq5 lem vis sh}, we get a contradiction.

	Thus, the condition \eqref{eq0 lem vis sh} implies that $B\in\overline{M(\Gamma, n)}$.
\end{proof}
\begin{Cor}\label{cor sum of vis sub}
	Let $t\in [0, 1]$ and $0\leq\psi_1, \psi_2\in C(\Omega)$. Assume that $u_j$
	is a viscosity subsolution of the equation $F(Hw)=\psi_j(z)$ in $\Omega$ for $j=1, 2$. Then,
	the function $tu_1+(1-t)u_2$ is a 
	subsolution of the equation $F(Hw)=t\psi_1(z)+(1-t)\psi_2(z)$.
\end{Cor}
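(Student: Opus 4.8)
The plan is to exploit the representation of $F$ as an infimum of linear functionals furnished by Lemma \ref{lem vis sh}, which reduces the claim to the (essentially trivial) corresponding fact for a single constant-coefficient linear elliptic operator. Write $u:=tu_1+(1-t)u_2$. By Proposition \ref{prop def3 vis}(a) it suffices to prove that for every $z_0\in\Omega$ and every $(p,X)\in\overline{\Jj}^{+}u(z_0)$ one has $F(X)\geq t\psi_1(z_0)+(1-t)\psi_2(z_0)$.

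First I would linearize. Fix $H\in M(\Gamma,n)$ and set $\widetilde{H}=\Phi(H)$; by Lemma \ref{lem vis sh} we have $\widetilde{H}\in M(\Gamma_n,n)$, so $\widetilde{H}$ is positive definite and $L_{\widetilde H}w:=\Delta_{\widetilde H}Hw$ is a constant-coefficient elliptic operator. If $(p,X)\in\overline{\Jj}^{+}u_j(z_0)$, then Proposition \ref{prop def3 vis}(a) gives $F(X)\geq\psi_j(z_0)\geq 0$, and since $F=-\infty$ off $\overline{M(\Gamma,n)}$ this forces $X\in\overline{M(\Gamma,n)}$. Applying Lemma \ref{lem vis sh}(a) with $B=X$ yields $\psi_j(z_0)\leq F(X)\leq\Delta_{\widetilde H}X+F(H)-\Delta_{\widetilde H}H$, that is,
\[
\Delta_{\widetilde H}X \;\geq\; \psi_j(z_0)+\Delta_{\widetilde H}H-F(H)\;=:\;g_j^H(z_0).
\]
Hence each $u_j$ is a viscosity subsolution of the linear equation $L_{\widetilde H}w=g_j^H$.

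The crux is to pass the convex combination through the \emph{linear} operator, i.e.\ to show that $u$ is a viscosity subsolution of $L_{\widetilde H}w=tg_1^H+(1-t)g_2^H$. This is the main obstacle, precisely because for the genuinely nonlinear $F$ convex combinations of subsolutions need not be subsolutions; the role of the linearization is that the combination \emph{is} legitimate for $L_{\widetilde H}$. I would argue this through the equivalence, for a constant-coefficient elliptic operator, between viscosity and distributional subsolutions: each $u_j$ being an $L_{\widetilde H}$-subsolution means $L_{\widetilde H}u_j\geq g_j^H$ in the sense of distributions, whence $L_{\widetilde H}u=tL_{\widetilde H}u_1+(1-t)L_{\widetilde H}u_2\geq tg_1^H+(1-t)g_2^H$ and $u$ is again an $L_{\widetilde H}$-subsolution. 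Alternatively one can stay inside the viscosity framework and use the theorem on sums \cite[Theorem 3.2]{CIL92}: given an upper test function $q$ for $u$ at $z_0$, it produces superjet data $X_1,X_2$ for $u_1,u_2$ with $tX_1+(1-t)X_2\leq Hq(z_0)$, so that positivity of $\widetilde H$ gives $\Delta_{\widetilde H}Hq(z_0)\geq t\Delta_{\widetilde H}X_1+(1-t)\Delta_{\widetilde H}X_2\geq tg_1^H(z_0)+(1-t)g_2^H(z_0)$.

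Finally I would recombine. By the limiting-superjet characterization for the continuous operator $L_{\widetilde H}$ (cf.\ Proposition \ref{prop def3 vis} and \cite[Proposition 4.3]{CIL92}), every $(p,X)\in\overline{\Jj}^{+}u(z_0)$ satisfies $\Delta_{\widetilde H}X\geq tg_1^H(z_0)+(1-t)g_2^H(z_0)$, which rearranges, for \emph{each} $H\in M(\Gamma,n)$, to
\[
\Delta_{\widetilde H}X+F(H)-\Delta_{\widetilde H}H \;\geq\; t\psi_1(z_0)+(1-t)\psi_2(z_0)\;\geq\;0 .
\]
Taking the infimum over $H\in M(\Gamma,n)$, Lemma \ref{lem vis sh}(b) forces $X\in\overline{M(\Gamma,n)}$, and then Lemma \ref{lem vis sh}(a) identifies this infimum with $F(X)$, giving $F(X)\geq t\psi_1(z_0)+(1-t)\psi_2(z_0)$. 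By Proposition \ref{prop def3 vis}(a) this shows that $tu_1+(1-t)u_2$ is a viscosity subsolution of $F(Hw)=t\psi_1+(1-t)\psi_2$, as desired.
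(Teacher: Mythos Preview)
Your proof is correct and follows essentially the same route as the paper: linearize via Lemma \ref{lem vis sh}, use the equivalence of viscosity and distributional subsolutions for the constant-coefficient elliptic operator $\Delta_{\widetilde H}$ to take the convex combination, and then return to $F$ via Lemma \ref{lem vis sh}. The paper makes the viscosity--distribution equivalence step explicit by invoking \cite[Proposition 3.2.10', p.~147]{Hor94} in both directions (cf.\ also Lemma \ref{lem poisson}); your alternative via the theorem on sums \cite[Theorem 3.2]{CIL92} would require the usual doubling-of-variables argument to produce superjets of $u_1,u_2$ at a common point, so the distributional route you sketch first is the cleaner of the two and is exactly what the paper does.
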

\begin{proof}
	By Lemma \ref{lem vis sh}, 	we have, for $j=1,2$,
	\begin{equation}\label{eq1 cor sum of vis sub}
	\Delta_{\widetilde{H}}u_j+F(H)-\Delta_{\widetilde{H}}H\geq \psi_j,
	\end{equation}
	in the viscosity sense in $\Omega$ for every $H\in M(\Gamma, n)$. Then, it follows from \cite[Proposition 3.2.10', page 147]{Hor94} that \eqref{eq1 cor sum of vis sub} holds in the distribution sense.
	Therefore, we have
	\begin{equation}\label{eq2 cor sum of vis sub}
	\Delta_{\widetilde{H}}(tu_1+(1-t)u_2)+F(H)-\Delta_{\widetilde{H}}H\geq t\psi_1(z)+(1-t)\psi_2(z),
	\end{equation}
	in the distribution sense. Using again \cite[Proposition 3.2.10', page 147]{Hor94}, we get
	\eqref{eq2 cor sum of vis sub} holds in the viscosity sense. Thus, by Lemma \ref{lem vis sh}, we obtain
	\begin{center}
		$F(H(tu_1+(1-t)u_2))\geq t\psi_1(z)+(1-t)\psi_2(z),$
	\end{center}
	in the viscosity sense.
\end{proof}
\begin{Cor}\label{prop gsh gvis}
	Let $u\in USC(\Omega)$. Then the following conditions are equivalent
	\begin{itemize}
		\item [a)] $u$ is subharmonic and for every $\epsilon>0$,
		$u\ast\chi_{\epsilon}$ is $\Gamma$-subharmonic in $\Omega_{\epsilon}$.
		Here $\chi_{\epsilon}$ is the standard modifier, 
		$\ast$ is the convolution operator and 
		$\Omega_{\epsilon}=\{z\in\Omega: d(z, \partial\Omega)>\epsilon\}$.
		\item [b)]  $u$ is $\Gamma$-subharmonic.
		\item [c)] $F(Hu)\geq 0$ in the viscosity sense, i.e., for any point $ z_0\in \Omega$ and any upper test function $q$ for $u$ at $z_0$, we have $Hq(z_0)\in \overline{M(\Gamma, n)}$.
	\end{itemize} 
\end{Cor}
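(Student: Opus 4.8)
The plan is to prove the three conditions equivalent cyclically, \((c)\Rightarrow(a)\Rightarrow(b)\Rightarrow(c)\). The engine is Lemma \ref{lem vis sh}, which lets me trade the single fully nonlinear inequality \(F(Hu)\geq 0\) for a whole family of \emph{linear, constant-coefficient} inequalities, one for each \(H\in M(\Gamma,n)\). For such operators the notions of viscosity and distributional subsolution coincide by \cite[Proposition 3.2.10', page 147]{Hor94}, which is exactly what makes mollification possible; this is the same mechanism already used in the proof of Corollary \ref{cor sum of vis sub}.

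First I record the linearization. Fix \(z_0\) and an upper test function \(q\) for \(u\) at \(z_0\), and write \(B=Hq(z_0)\). Since \(F\geq 0\) on \(\overline{M(\Gamma,n)}\) and \(F=-\infty\) off it, the condition \(F(B)\geq 0\) is the same as \(B\in\overline{M(\Gamma,n)}\). By part a) of Lemma \ref{lem vis sh}, if \(B\in\overline{M(\Gamma,n)}\) then \(F(B)\) equals the infimum over \(H\in M(\Gamma,n)\) of \(\Delta_{\widetilde H}B+F(H)-\Delta_{\widetilde H}H\), so \(F(B)\geq 0\) iff every term of that infimum is \(\geq 0\); by part b), if \(B\notin\overline{M(\Gamma,n)}\) the infimum is negative. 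Swapping the (harmless) quantifiers over \(H\) and over test functions, \(u\) is a viscosity subsolution of \(F(Hu)\geq 0\) if and only if for every \(H\in M(\Gamma,n)\) the function \(u\) is a viscosity subsolution of the linear equation
\begin{equation*}
\Delta_{\widetilde H}w=\Delta_{\widetilde H}H-F(H),
\end{equation*}
whose operator \(\Delta_{\widetilde H}\) is elliptic because \(\widetilde H\in M(\Gamma_n,n)\) is positive definite.

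Now \((c)\Rightarrow(a)\). Assuming (c), taking traces in \(B\in\overline{M(\Gamma,n)}\subseteq\overline{M(\Gamma_1,n)}\) shows \(u\) is subharmonic (in the viscosity, hence classical, sense), so \(u\in L^1_{loc}\) and the convolutions are legitimate. By the linearization and \cite[Proposition 3.2.10', page 147]{Hor94}, for each \(H\in M(\Gamma,n)\) the inequality \(\Delta_{\widetilde H}u\geq\Delta_{\widetilde H}H-F(H)\) holds in the sense of distributions. Convolving with the nonnegative kernel \(\chi_\epsilon\), which commutes with the constant-coefficient operator \(\Delta_{\widetilde H}\), turns this into the pointwise inequality \(\Delta_{\widetilde H}(u\ast\chi_\epsilon)\geq\Delta_{\widetilde H}H-F(H)\) on \(\Omega_\epsilon\), valid for every \(H\in M(\Gamma,n)\). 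At each \(z\in\Omega_\epsilon\) this says the infimum in Lemma \ref{lem vis sh} evaluated at \(B=H(u\ast\chi_\epsilon)(z)\) is \(\geq 0\), so part b) gives \(H(u\ast\chi_\epsilon)(z)\in\overline{M(\Gamma,n)}\); that is, \(u\ast\chi_\epsilon\) is \(\Gamma\)-subharmonic, which together with the subharmonicity of \(u\) yields (a).

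Finally \((a)\Rightarrow(b)\Rightarrow(c)\). For \((a)\Rightarrow(b)\) the functions \(u\ast\chi_\epsilon\) are smooth, decrease to \(u\) as \(\epsilon\searrow 0\) (standard for subharmonic \(u\) with the radial mollifier), and are \(\Gamma\)-subharmonic; to land in the open cone demanded by the smooth definition I replace \(u\ast\chi_\epsilon\) by \(u\ast\chi_\epsilon+\delta_\epsilon|z|^2\) with \(\delta_\epsilon\searrow 0\) chosen so the sequence still decreases to \(u\), using that adding \(\delta I\) pushes \(\overline{M(\Gamma,n)}\) into \(M(\Gamma,n)\). This produces on any \(U\Subset\Omega\) the decreasing sequence required by (b). For \((b)\Rightarrow(c)\), each member of such a sequence is a classical, hence viscosity, subsolution of \(F(Hw)\geq 0\), so Proposition \ref{prop liminf limsup} shows the limit \(u\in USC(\Omega)\) is again a viscosity subsolution, i.e. (c). I expect the only genuinely delicate step to be \((c)\Rightarrow(a)\): passing from the viscosity inequality to a distributional one for each linearized operator and recovering membership in \(\overline{M(\Gamma,n)}\) after convolution via part b) of Lemma \ref{lem vis sh}. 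The remaining implications are soft, the main cosmetic point being the open-versus-closed-cone adjustment.
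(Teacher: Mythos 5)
Your proposal is correct and follows essentially the same route as the paper: the hard implication $(c)\Rightarrow(a)$ is done exactly as in the text, by linearizing via Lemma \ref{lem vis sh}, passing from the viscosity to the distributional inequality for each constant-coefficient operator $\Delta_{\widetilde H}$ (the paper routes this through its Lemma \ref{lem poisson}, which rests on the same citation of H\"ormander), convolving, and invoking part b) of Lemma \ref{lem vis sh}; the remaining implications are treated as soft in both arguments. Your explicit $\delta_\epsilon|z|^2$ perturbation to land in the open cone $M(\Gamma,n)$ is a small point the paper glosses over as ``clear,'' but it is the same idea in substance.
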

\begin{proof}
	$(a)\Rightarrow(b)$ and $(b)\Rightarrow (c)$ are clear. It remains to show $(c)\Rightarrow (a)$.
	
	Assume that $F(Hu)\geq 0$ in the viscosity sense. Then, by the definition and by the condition
	 $\Gamma\subset\Gamma_1$, we have $\Delta u\geq 0$ in the
	viscosity sense. Hence, by \cite[Proposition 3.2.10', page 147]{Hor94}, we get $u\in SH(\Omega)$.
	
	Moreover, it follows from Lemma \ref{lem vis sh} that
	\begin{equation}\label{eq1 prop vis sh}
	\Delta_{\widetilde{H}}u+F(H)-\Delta_{\widetilde{H}}H\geq 0,
	\end{equation}
	in the viscosity sense for every $H\in M(\Gamma, n)$. Then, it follows from Lemma \ref{lem poisson} that 
	\eqref{eq1 prop vis sh} holds in the distribution sense. Hence
	\begin{center}
		$\Delta_{\widetilde{H}}(u\ast\chi_{\epsilon})+F(H)-\Delta_{\widetilde{H}}H\geq 0,$
	\end{center}
	in the classical sense in $\Omega_{\epsilon}$ for every $H\in M(\Gamma, n)$. Using again Lemma \ref{lem vis sh}, we have 
	$H(u\ast\chi_{\epsilon})(z)\in \overline{M(\Gamma, n)}$ for every $\epsilon>0$ and $z\in\Omega_{\epsilon}$. Thus  $u\ast\chi_{\epsilon}$ is
	$\Gamma$-subharmonic in $\Omega_{\epsilon}$.
	
	The proof is completed.
\end{proof}
\begin{Lem}\label{lem poisson}
	Let $U\subset\R^N (N\geq 2)$ be a bounded domain. Assume that $g\in C(U)$ and $u\in USC(U)$. Then, $\Delta u\geq g$ in the
	viscosity sense iff $\Delta u\geq g$ in the distribution sense.
\end{Lem}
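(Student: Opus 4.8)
My plan is to prove the two implications separately, in each case reducing to the classical fact for $g\equiv0$ (contained in \cite[Proposition 3.2.10']{Hor94} and already used in the proof of Corollary \ref{prop gsh gvis}): a function in $USC\cap L^1_{\mathrm{loc}}$ is a viscosity subsolution of $\Delta u\ge0$ if and only if it is subharmonic, i.e. if and only if $\Delta u\ge0$ in the distribution sense.

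For the implication \emph{distribution $\Rightarrow$ viscosity} I would argue by contradiction. Let $q$ be a $C^2$ upper test function for $u$ at $z_0$, so $q\ge u$ near $z_0$ and $q(z_0)=u(z_0)$, and suppose $\Delta q(z_0)<g(z_0)$. By continuity of $g$ and $\Delta q$ there are a ball $B=B(z_0,r)\Subset U$ and $\eta>0$ with $g-\Delta q\ge\eta$ on $B$, whence $\Delta(u-q)\ge\eta$ in $\mathcal D'(B)$. Then $s:=u-q-\tfrac{\eta}{2N}|z-z_0|^2$ lies in $USC\cap L^1_{\mathrm{loc}}$ and satisfies $\Delta s\ge0$ in the distribution sense, so by the classical fact $s$ is subharmonic. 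But $s(z_0)=0$ while $s<0$ on $B\setminus\{z_0\}$ (since $u\le q$ there), which contradicts the sub–mean value inequality $s(z_0)\le\frac{1}{|B(z_0,\rho)|}\int_{B(z_0,\rho)}s$. Hence $\Delta q(z_0)\ge g(z_0)$, and $u$ is a viscosity subsolution.

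For the implication \emph{viscosity $\Rightarrow$ distribution} I would mollify. Put $u_\epsilon=u\ast\chi_\epsilon$ on $U_\epsilon=\{z\in U:d(z,\partial U)>\epsilon\}$ and $g_\epsilon(z)=\min_{|w|\le\epsilon}g(z-w)$, which is continuous and converges to $g$ locally uniformly. The heart of the matter is to show $\Delta u_\epsilon\ge g_\epsilon$ classically on $U_\epsilon$. For each fixed $|w|\le\epsilon$ the translate $u^w:=u(\cdot-w)$ is a viscosity subsolution of $\Delta v\ge g_\epsilon$ on $U_\epsilon$, by translation invariance of $\Delta$ and $g(\cdot-w)\ge g_\epsilon$. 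Now fix a ball in $U_\epsilon$ and a smooth $\tilde g\le g_\epsilon$, and choose a smooth $P$ there with $\Delta P=\tilde g$; then each $u^w-P$ is a viscosity subsolution of $\Delta(\cdot)\ge0$ (subtracting the smooth $P$ merely shifts the test functions), hence subharmonic by the classical fact. Since subharmonicity is stable under averaging, $u_\epsilon-P=\int\chi_\epsilon(w)(u^w-P)\,dw$ is subharmonic, and being smooth it satisfies $\Delta(u_\epsilon-P)\ge0$ pointwise, i.e. $\Delta u_\epsilon\ge\tilde g$; letting $\tilde g\uparrow g_\epsilon$ gives $\Delta u_\epsilon\ge g_\epsilon$. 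Finally, since $g$ is locally bounded below, $u$ is up to a smooth function a viscosity subsolution of $\Delta(\cdot)\ge0$, hence subharmonic and in particular $u\in L^1_{\mathrm{loc}}$; therefore $u_\epsilon\to u$ in $L^1_{\mathrm{loc}}$, and testing $\Delta u_\epsilon\ge g_\epsilon$ against $0\le\phi\in C_c^\infty(U)$ and letting $\epsilon\to0$ yields $\int u\,\Delta\phi\ge\int g\,\phi$, i.e. $\Delta u\ge g$ in the distribution sense.

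The step I expect to be the main obstacle is precisely the averaging in the second implication: averages (even sums) of viscosity subsolutions need not be viscosity subsolutions, since an upper test function for the average does not split into upper test functions for the individual translates. The device that overcomes this is to subtract a smooth particular solution $P$ of $\Delta P=\tilde g$, reducing to the homogeneous inequality $\Delta(\cdot)\ge0$; this subtraction is harmless for the viscosity notion, and for the homogeneous equation the viscosity property coincides with genuine subharmonicity, which \emph{is} preserved under averaging. The remaining points — continuity and monotone smooth approximation of $g_\epsilon$, local integrability of $u$, and the passage to the limit under the integral — are routine consequences of the continuity of $g$.
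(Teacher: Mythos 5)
Your argument is correct (modulo the appeal to \cite[Proposition 3.2.10', page 147]{Hor94}, which the paper also relies on), but it is organized quite differently from the paper's proof and is considerably longer. The paper treats both directions at once: it reduces to $g\in C_c^2(\R^N)$ by choosing $g_j\in C_c^2$ with $g_j\nearrow g$ (the inequality for $g$, in either sense, is equivalent to the inequality for every $g_j$, by monotonicity), and then subtracts the Newtonian potential $v=E\ast g_j$, a classical solution of $\Delta v=g_j$, so that both senses of $\Delta u\geq g_j$ become the single homogeneous statement $\Delta(u-v)\geq 0$, to which H\"ormander's equivalence applies directly. You instead prove the two implications separately: for ``distribution $\Rightarrow$ viscosity'' you replace the potential by the quadratic $\frac{\eta}{2N}|z-z_0|^2$ and argue by contradiction, which is purely local and uses only continuity of $g$ at the test point; for ``viscosity $\Rightarrow$ distribution'' you mollify, translate, subtract a potential of a smooth minorant $\tilde g\leq g_\epsilon$, and average. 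That second argument in fact establishes the stronger classical inequality $\Delta(u\ast\chi_\epsilon)\geq g_\epsilon$ on $U_\epsilon$, which is the Laplacian case of Theorem \ref{main 1}; the paper obtains that statement afterwards as a consequence of the lemma rather than as an ingredient of its proof. The underlying device is the same in both treatments --- reduce to the homogeneous inequality by subtracting a classical solution of a Poisson equation --- and you correctly identify why it is indispensable. One caveat, which your first implication shares with the paper's proof rather than adding to it: the inference from ``$\Delta s\geq 0$ in $\mathcal{D}'$'' to the pointwise sub-mean value inequality for the given USC representative $s$ at the specific point $z_0$ is precisely the delicate half of the cited equivalence (a distributionally subharmonic $L^1_{\rm loc}$ function a priori agrees only almost everywhere with a subharmonic one), so it should be acknowledged explicitly rather than folded silently into ``the classical fact''.
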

\begin{proof}
	If $g\in C_c^2(\R^N)$ then
	 $v=E\ast g$ is a classical solution to the Poisson equation $\Delta w=g$, where
	\begin{center}
		$E(x)=\begin{cases}
		\frac{1}{2\pi}\log|x|\qquad (N=2),\\
		\frac{-1}{N(N-2) c_N}\frac{1}{|x|^{N-2}}\qquad (N\geq 3),
		\end{cases}$
	\end{center}
	and $c_N$ is the volume of $\B^N$. It follows from \cite[Proposition 3.2.10', page 147]{Hor94} that $\Delta (u-v)\geq 0$ in the
	viscosity sense iff $\Delta (u-v)\geq 0$ in the distribution sense. Hence, 
	$\Delta u\geq g (=\Delta v)$ in the
	viscosity sense iff $\Delta u\geq g$ in the distribution sense.
	
	In the general case, since the problem is local, we can assume that $g\in C_c(\R^N)$.
	Then, we can choose a sequence $g_j\in C_c^2(\R^N)$ such that $g_j\nearrow g$ as $j\rightarrow\infty$. Hence,
	by the above argument, we have
	\begin{flushleft}
		$\begin{array}{ll}
		(\Delta u\geq g \mbox{ in the viscosity sense}) &\Leftrightarrow (\Delta u\geq g_j \mbox{ in the viscosity sense for every j})\\
		&\Leftrightarrow (\Delta u\geq g_j \mbox{ in the distribution sense for every j})\\
		&\Leftrightarrow (\Delta u\geq g \mbox{ in the distribution sense}).
		\end{array}$
	\end{flushleft}
\end{proof}
\begin{proof}[Proof of Theorem \ref{main 1}]
If $u$ is $\Gamma$-subharmonic
and $F(H(u\ast\chi_{\epsilon}))\geq \psi\ast\chi_{\epsilon}$ in $\Omega_{\epsilon}$ for every $\epsilon>0$ 
then, by the definition, we have $F(H(u\ast\chi_{\epsilon}))\geq \psi\ast\chi_{\epsilon}$ in the viscosity sense in
 $\Omega_{\epsilon}$ for every $\epsilon>0$. Hence,
 $F(H(u\ast\chi_{\epsilon}))\geq \psi_r$ in the viscosity sense in
 $\Omega_r$ for every $0<\epsilon<r$, where $\psi_r(z)=\inf_{|z-w|<r}\psi(w)$. Since $u$ is subharmonic, we have 
 $u\ast\chi_{\epsilon}\searrow u$ as $\epsilon\searrow 0$. Using Proposition \ref{prop liminf limsup}, we get
 $F(H u)\geq \psi_r$ in the viscosity sense in
 $\Omega_r$ for every $r>0$. Letting $r\searrow 0$, we obtain $F(H u)\geq \psi$ in the viscosity sense in
 $\Omega$.

Conversely, assume that $u$ is a viscosity subsolution of the equation
	$F(Hw)=\psi(z)$
in $\Omega$. By  Corollary \ref{prop gsh gvis}, we have $u$ and
 $u\ast\chi_{\epsilon}$ are $\Gamma$-subharmonic ($\epsilon>0$). Moreover, by the same argument as in the proof of 
 Corollary \ref{prop gsh gvis}, we also have
 \begin{center}
 		$\Delta_{\widetilde{H}}(u\ast\chi_{\epsilon})+F(H)-\Delta_{\widetilde{H}}H\geq \psi\ast\chi_{\epsilon},$
 \end{center}
in the classical sense in $\Omega_{\epsilon}$ for every $H\in M(\Gamma, n)$.
Hence, it follows from  Lemma \ref{lem vis sh} that  
$F(H(u\ast\chi_{\epsilon}))\geq \psi\ast\chi_{\epsilon}$ in $\Omega_{\epsilon}$ in the classical sense.
\end{proof}
\begin{proof}[Proof of Corollary \ref{cor1 main1}]
	 If there exists a decreasing sequence $\{u_j\}$ of smooth
	$\Gamma$-subharmonic functions on $U$ such that $u_j\rightarrow u$ as $j\rightarrow\infty$ and 
	$F(Hu_j(x))\geq\psi (z)$ in $\Omega$ for every $j$ then, by Proposition \ref{prop liminf limsup},
	$u$ is a viscosity subsolution of \eqref{main free boundary}.
	
	For the converse, assume that $u$ is a viscosity subsolution of \eqref{main free boundary} and
	$U$ is a relatively compact open subset of $\Omega$. By Theorem \ref{main 1}, we have
	$u\ast \chi_{\epsilon}\searrow u$ as $\epsilon\searrow 0$ and $F(Hu\ast \chi_{\epsilon})\geq\psi\ast\chi_{\epsilon}$ in $U$ for every $0<\epsilon<\epsilon_0$,
	where $\epsilon_0>0$ is small enough such that $U\Subset\Omega_{\epsilon_0}$. Since $\psi\ast\chi_{\epsilon}$
	converges uniformly to $\psi$ in $U$, there exists $0<...<\epsilon_{j+1}<\epsilon_j<...<\epsilon_1<\epsilon_0$
	such that $\lim_{j\to\infty}\epsilon_j=0$ and 
	\begin{center}
	$|\psi\ast\chi_{\epsilon}-\psi|<\dfrac{1}{2^j},$
	\end{center}
	in $U$. By the assumption, there exists $R\gg 1$ such that $f(R, R, ...,R)>\sup_{U}\psi+r$ for some $0<r<1$.
	For every $j>-\log_2 r$, we denote:
	\begin{center}
		$u_j(z)=u\ast \chi_{\epsilon_j}+\dfrac{R|z|^2}{2^{j+1}r}.$
	\end{center}
Then, $u_j$ is a decreasing sequence of smooth $\Gamma$-subharmonic functions in $U$ satisfying
$\lim_{j\to\infty}u_j=u$. Moreover, for every $j>-\log_2 r$ and for each $z\in U$, we have
\begin{flushleft}
	$\begin{array}{ll}
	F(Hu_j)&\geq (1-\dfrac{1}{2^{j}r})F(Hu\ast \chi_{\epsilon_j})
	+\dfrac{1}{2^jr}F(H(u\ast \chi_{\epsilon_j}+\dfrac{R|z|^2}{2}))\\
	&\geq (1-\dfrac{1}{2^{j}r})\psi\ast\chi_{\epsilon}+\dfrac{1}{2^jr}F(R I)\\
	&\geq (1-\dfrac{1}{2^{j}r})(\psi(z)-\dfrac{1}{2^j})+\dfrac{1}{2^jr}(\psi(z)+r)\\
	&>\psi(z),
	\end{array}$
\end{flushleft}
in $U$. 
\end{proof}
\section{Comparison principle and applications}
Now we prove the second main theorem of this paper:
\begin{The}
 Let $u\in USC\cap L^{\infty}(\overline{\Omega})$ and 
$v\in LSC\cap L^{\infty}(\overline{\Omega})$, respectively, be a bounded subsolution and a 
bounded supersolution of the equation
\begin{equation}\label{eq0 elliptic compa}
F(H w)=\psi (z, w),
\end{equation}  in $\Omega$. 
Assume that $u\leq v$ in $\partial\Omega$ and
\begin{equation}\label{eq f at infty}
	\lim\limits_{R\to\infty}f(R, R,..., R)>\sup\limits_{\Omega}\psi (z, v(z)).
\end{equation}
Then $u\leq v$ in $\Omega$.
\end{The}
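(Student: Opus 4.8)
The plan is to argue by contradiction with Ishii's doubling-of-variables method and the theorem on sums, the whole point being to turn the infinity condition \eqref{eq f at infty} into the strict monotonicity that \eqref{eq0 elliptic compa} lacks, since $\psi$ is only non-decreasing in its last variable. Suppose $\sup_{\overline\Omega}(u-v)>0$. By \eqref{eq f at infty} I fix $R\gg1$ with $B:=f(R,\dots,R)=F(RI)>A:=\sup_{\Omega}\psi(z,v(z))$, and I set $\rho(z)=\tfrac{R}{2}|z|^2-C$, choosing $C$ so large that $\rho\le\inf_{\overline\Omega}v$; note $H\rho\equiv RI\in M(\Gamma,n)$ because $(R,\dots,R)\in\Gamma_n\subseteq\Gamma$, so $\rho$ is smooth and $\Gamma$-subharmonic with $F(H\rho)=B$.

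First I would strictify the subsolution. For $\gamma\in(0,1)$ put $u_\gamma=(1-\gamma)u+\gamma\rho$. If $q$ is an upper test function for $u_\gamma$ at $z_0$, then $\tilde q:=\tfrac{1}{1-\gamma}(q-\gamma\rho)$ is $C^2$ and is an upper test function for $u$ at $z_0$, so $F(H\tilde q(z_0))\ge\psi(z_0,u(z_0))$; writing $Hq=(1-\gamma)H\tilde q+\gamma H\rho$ with both terms in $\overline{M(\Gamma,n)}$ and using concavity of $F$, I get $F(Hq(z_0))\ge(1-\gamma)\psi(z_0,u(z_0))+\gamma B$. Thus $u_\gamma$ is a subsolution with a uniform strict gain $\gamma B$. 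Moreover $u_\gamma\le v$ on $\partial\Omega$ (since $u\le v$ and $\rho\le v$ there), and, with $\rho$ now fixed, for $\gamma$ small $\sup_{\overline\Omega}(u_\gamma-v)>0$ still holds because $u_\gamma\to u$ uniformly as $\gamma\to0$.

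Next I would run the doubling: for $\alpha>0$ let $(z_\alpha,w_\alpha)$ maximize $u_\gamma(z)-v(w)-\tfrac\alpha2|z-w|^2$ on $\overline\Omega\times\overline\Omega$. The standard estimates from \cite{CIL92} give $\alpha|z_\alpha-w_\alpha|^2\to0$, and since $u_\gamma\le v$ on $\partial\Omega$ while the maximal value stays positive, $(z_\alpha,w_\alpha)$ is interior for large $\alpha$. The (complex) theorem on sums then yields $X,Y\in\mathcal H^n$ with $X\le Y$, $(p_\alpha,X)\in\overline{\Jj}^{+}u_\gamma(z_\alpha)$ and $(p_\alpha,Y)\in\overline{\Jj}^{-}v(w_\alpha)$, where $p_\alpha=\alpha(z_\alpha-w_\alpha)$. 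Because $\rho$ is smooth, $(p_\alpha,X)\in\overline{\Jj}^{+}u_\gamma(z_\alpha)$ corresponds to $(p'_\alpha,X')\in\overline{\Jj}^{+}u(z_\alpha)$ with $X=(1-\gamma)X'+\gamma RI$; Proposition~\ref{prop def3 vis} (valid since $\psi(z,w)$ is continuous) gives $F(X')\ge\psi(z_\alpha,u(z_\alpha))$, hence by concavity $F(X)\ge(1-\gamma)\psi(z_\alpha,u(z_\alpha))+\gamma B$, while $F(Y)\le\psi(w_\alpha,v(w_\alpha))$. Since $X\le Y$, \eqref{F is increasing} yields $F(X)\le F(Y)$, so $(1-\gamma)\psi(z_\alpha,u(z_\alpha))+\gamma B\le\psi(w_\alpha,v(w_\alpha))$.

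Finally I would extract the contradiction. At the maximum $u_\gamma(z_\alpha)>v(w_\alpha)$, and since $\rho(z_\alpha)\le\inf_{\overline\Omega}v\le v(w_\alpha)$ this forces $u(z_\alpha)>v(w_\alpha)$; monotonicity of $\psi$ in the last variable together with its uniform continuity on $\overline\Omega\times[-\|v\|_\infty,\|v\|_\infty]$ and $z_\alpha-w_\alpha\to0$ give $\psi(z_\alpha,u(z_\alpha))\ge\psi(z_\alpha,v(w_\alpha))\ge\psi(w_\alpha,v(w_\alpha))-\epsilon_\alpha$ with $\epsilon_\alpha\to0$. Substituting and cancelling a factor, $\gamma B\le\gamma\psi(w_\alpha,v(w_\alpha))+(1-\gamma)\epsilon_\alpha\le\gamma A+(1-\gamma)\epsilon_\alpha$, so $B\le A+\tfrac{1-\gamma}{\gamma}\epsilon_\alpha\to A$ as $\alpha\to\infty$, contradicting $B>A$. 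I expect the main obstacle to be the bookkeeping in the complex theorem on sums and, above all, transferring the strict subsolution inequality through the closed superjet $\overline{\Jj}^{+}$: passing to the jets of $u$ (whose right-hand side $\psi(z,w)$ is continuous, unlike that of $u_\gamma$) and invoking smoothness of $\rho$ with concavity of $F$ is what legitimizes this step, while the essential idea is that \eqref{eq f at infty} produces the gain $\gamma B$ surviving the division by $\gamma$.
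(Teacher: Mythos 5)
Your proof is correct and follows essentially the same route as the paper's: doubling of variables plus the theorem on sums, with the coercivity hypothesis \eqref{eq f at infty} and the concavity of $F$ converted into a uniform strict gain that compensates for $\psi$ being merely non-decreasing in its last variable. The only differences are cosmetic: you strictify via the convex combination $(1-\gamma)u+\gamma\rho$ and the two-point concavity inequality, whereas the paper perturbs additively by $\delta(|z|^2-A)$ and extracts the same quantitative gain from a slope comparison along the segment from $Z_N-2\delta I$ to $Z_N+R_0I$ (also note that with the paper's convention $H\bigl(\tfrac{R}{2}|z|^2\bigr)=\tfrac{R}{2}I$ rather than $RI$, a harmless factor of $2$).
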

\begin{proof}
	First, we consider the case where $u-\delta |z|^2$ is a subsolution of \eqref{eq0 elliptic compa} for
	some $\delta>0$. Assume that there exists $z_0\in\Omega$ such that
	\begin{equation}\label{eq1 elliptic compa}
	(u-v)(z_0)=\max\limits_{\overline{\Omega}}(u-v)>0.
	\end{equation}
	For each $N>0$, we denote
	\begin{center}
		$\phi_N(z, w)=u(z)-v(w)-N|z-w|^2,$
	\end{center}
for all $(z, w)\in\overline{\Omega}^2$. Since $\overline{\Omega}^2$ is compact and $\phi_N$ is upper semicontinuous, there exists $(z_N, w_N)\in \overline{\Omega}^2$ such that
\begin{center}
	$\phi_{N}(z_N, w_N)=\max\limits_{\overline{\Omega}^2}\phi_N$.
\end{center}
Moreover, by \cite[Lemma 3.1]{CIL92}, we can assume that $z_N$ and $w_N$ converge to $z_0$
as $N\rightarrow\infty$. In particular, there exists $N_0>0$ such that $z_N, w_N\in B(z_0, R)$ for every $N>N_0$,
where $0<R<d(z_0, \partial\Omega)$.
By the maximum principle \cite[Theorem 3.2]{CIL92}, there exist
 $Z_N, W_N\in\mathcal{H}^n$ such that 
 $(2N(z_N-w_N), Z_N)\in \overline{\Jj}^+u(z_N)$, $(2N(z_N-w_N), W_N)\in \overline{\Jj}^-v(w_N)$ and
 $Z_N\leq W_N$ for all $N>N_0$. Hence, we have
 \begin{equation}\label{eq2 elliptic compa}
 F(Z_N-2\delta I)\geq \psi (z_N, u(z_N)),
 \end{equation}
 and 
 \begin{equation}\label{eq3 elliptic compa}
 F(W_N)\leq \psi (w_N, v(w_N)),
 \end{equation}
 and
 \begin{equation}\label{eq4 elliptic compa}
 F(Z_N)\leq F(W_N).
 \end{equation}
 Combining \eqref{eq3 elliptic compa} and \eqref{eq4 elliptic compa}, we get
 \begin{equation}\label{eq5 elliptic compa}
 F(Z_N)\leq M,
 \end{equation}
 for all $N>N_0$, where $M:=\sup\{\psi (z, v(z)): x\in B(z_0, R)\}$.
 Since
 $\lim_{R\to\infty}f(R,..., R)>M$, there exist $R_0\gg 1$ and $0<r\ll 1$ such that
 \begin{center}
 $F(R I)=f(R,...,R)>M+r.$
 \end{center}
for all $R>R_0$.
Then, by \eqref{eq5 elliptic compa} and by the concavity of $F$, we have, for every $0<\epsilon<1$,
\begin{flushleft}
	$\begin{array}{ll}
	\dfrac{F(Z_N)-F(Z_N-2\delta I)}{2\delta}&\geq \dfrac{F(Z_N+R_0I)-F(Z_N)}{R_0}\\
	&\geq \dfrac{\epsilon F(Z_N/\epsilon)+(1-\epsilon)F(R_0I/(1-\epsilon))-M}{R_0}\\
	&\geq\dfrac{(1-\epsilon)(M+r)-M}{R_0},
	\end{array}$
\end{flushleft}
for all $N>N_0$. Letting $\epsilon=\dfrac{r}{2M+r}$, we get
\begin{equation}\label{eq6 elliptic compa}
F(Z_N)\geq F(Z_N-\delta I)+\dfrac{2M\delta^2}{(2M+r)R_0},
\end{equation}
for every $N>N_0$. Combining \eqref{eq2 elliptic compa}, \eqref{eq3 elliptic compa}, \eqref{eq4 elliptic compa} 
and \eqref{eq6 elliptic compa}, we obtain
\begin{equation}\label{eq7 elliptic compa}
\psi (z_N, u(z_N))+\dfrac{2M\delta^2}{(2M+r)R_0}\leq \psi (w_N, v(w_N)),
\end{equation}
for every $N>N_0$. Since $z_N, w_N\rightarrow z_0$ and $\psi$ is uniformly continuous, we also have
\begin{equation}\label{eq8 elliptic compa}
\lim\limits_{N\to\infty}(\psi (z_N, u(z_N))-\psi (w_N, u(z_N)))=0.
\end{equation}
Moreover, it follows from \cite[Lemma 3.1]{CIL92} that $\lim_{N\to\infty}(u(z_N)-v(w_N))=\max\limits_{\overline{\Omega}}(u-v)>0$. Hence, since
$\psi$ is non decreasing in the last variable, we get
\begin{equation}\label{eq9 elliptic compa}
\liminf\limits_{N\to\infty}(\psi (w_N, u(z_N))-\psi (w_N, v(w_N)))\geq 0.
\end{equation}
Combining \eqref{eq7 elliptic compa}, \eqref{eq8 elliptic compa} and \eqref{eq9 elliptic compa}, we get
\begin{center}
	$0\leq \liminf\limits_{N\to\infty}(\psi (z_N, u(z_N))-\psi (w_N, v(w_N)))\leq -\dfrac{2M\delta^2}{(2M+r)R_0},$
\end{center}
and this is a contradiction. Thus, \eqref{eq1 elliptic compa} is not true.

In the general case, for each $\delta>0$, we denote $u_{\delta}(z)=u(z)+\delta(|z|^2-A)$, where $A=\max\{|w|^2: w\in\overline{\Omega} \}$.
By the above argument, we have $u_{\delta}\leq v$ in $\Omega$ for all $\delta>0$. Letting $\delta\searrow 0$, we get $u\leq v$ in $\Omega$.

The proof is completed.
\end{proof}
By using the Perron method \cite{CIL92} and Theorem \ref{main 2}, we obtain the following result:
\begin{Prop}\label{prop perron}
	Let 
	$v\in LSC\cap L^{\infty}(\overline{\Omega})$ be a 
	bounded supersolution of the equation
	\begin{equation}\label{eq0 perron}
	F(H w)=\psi (z, w),
	\end{equation}  in $\Omega$ such that 
	\begin{center}
		$\lim\limits_{\Omega\ni z\to \hat{z}}v(z)=v(\hat{z}),$
	\end{center}
	for all $\hat{z}\in\partial\Omega$ and
	\begin{center}
		$\lim\limits_{R\to\infty}f(R, R,..., R)>\sup\limits_{\Omega}\psi (z, v(z)).$
	\end{center}
 Denote by $S$ the set of all viscosity subsolutions $w$  to \eqref{eq0 perron} satisfying $w\leq v$.
 Then the function 
 \begin{center}
 		$u(z)=\sup\{w(z): w\in S \},$
 \end{center}
is a discontinuous viscosity solution of \eqref{eq0 perron} with $u=u^*\in S$.
\end{Prop}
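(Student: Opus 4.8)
The plan is to run the classical Perron method of \cite{CIL92} in three stages: first check that the family $S$ is nonempty (so that $u$ is a well-defined bounded function), then show that the upper regularization $u^*$ is a subsolution lying in $S$ (whence $u=u^*$), and finally show that the lower regularization $u_*$ is a supersolution by a bump-and-glue argument. The growth hypothesis on $f$ enters both to produce a global barrier and to invoke the comparison principle.

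To see $S\neq\emptyset$, I would exhibit a global subsolution below $v$. Put $A=\max\{|w|^2:w\in\overline\Omega\}$ and $c=\inf_{\overline\Omega}v$, which is finite since $v\in LSC\cap L^\infty$, and for $\delta>0$ set $w_0(z)=\delta(|z|^2-A)+c$. Then $w_0$ is smooth with $Hw_0=\delta I\in M(\Gamma,n)$, and since $|z|^2\le A$ on $\overline\Omega$ we get $w_0\le c\le v$; as $\psi$ is non-decreasing in its last variable, $\psi(z,w_0(z))\le\psi(z,v(z))$. By the hypothesis $\lim_{R\to\infty}f(R,\dots,R)>\sup_\Omega\psi(z,v(z))$ I may fix $\delta$ so large that $F(\delta I)=f(\delta,\dots,\delta)>\sup_\Omega\psi(z,v(z))\ge\psi(z,w_0(z))$, so $w_0$ is a classical (hence viscosity) subsolution with $w_0\le v$, i.e. $w_0\in S$. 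Consequently $c\le w_0\le u\le v\le\sup_\Omega v$, so $u$ is bounded. Since $u$ is locally bounded above, Proposition \ref{prop liminf limsup}(a) shows $u^*$ is a viscosity subsolution of \eqref{eq0 perron}. To place $u^*$ in $S$ I only need $u^*\le v$, and this I get from comparison: on $\partial\Omega$ the boundary continuity of $v$ gives $u^*(\hat z)=\limsup_{\Omega\ni z\to\hat z}u(z)\le\limsup_{\Omega\ni z\to\hat z}v(z)=v(\hat z)$, while the growth hypothesis holds on every $K\Subset\Omega$; hence Theorem \ref{main 2} yields $u^*\le v$ in $\Omega$. Thus $u^*\in S$, so $u^*\le\sup S=u\le u^*$, giving $u=u^*$.

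The remaining point, that $u_*$ is a supersolution, is the heart of the matter, and I would argue by contradiction. If $u_*$ failed to be a supersolution there would be $z_0\in\Omega$ and a lower test function $q$ for $u_*$ at $z_0$ with $F(Hq(z_0))>\psi(z_0,u_*(z_0))\ge0$ (so in fact $Hq(z_0)\in M(\Gamma,n)$). A dichotomy is decisive. If $u_*(z_0)=v(z_0)$, then $q\le u_*\le u\le v$ near $z_0$ with $q(z_0)=v(z_0)$, so $q$ is a lower test function for the supersolution $v$ at $z_0$, forcing $F(Hq(z_0))\le\psi(z_0,v(z_0))=\psi(z_0,u_*(z_0))$, a contradiction; hence necessarily $u_*(z_0)<v(z_0)$. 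In that case set $q_{\kappa,\theta}(z)=q(z)-\kappa|z-z_0|^2+\theta$. Since $Hq_{\kappa,\theta}=Hq-\kappa I$ and $F,\psi,q$ are continuous, for small $\kappa,\theta$ and a small ball $B(z_0,\rho)$ this is a classical subsolution there; because $u_*(z_0)<v(z_0)$ and $v$ is lower semicontinuous, one can further shrink $\theta,\rho$ so that $q_{\kappa,\theta}\le v$ on $B(z_0,\rho)$, and choosing $\theta<\kappa\rho^2$ forces $q_{\kappa,\theta}<u$ near $\partial B(z_0,\rho)$. Then Proposition \ref{prop gluing} applies to $G=B(z_0,\rho)$, so $\tilde u$ equal to $u$ off $G$ and to $\max\{u,q_{\kappa,\theta}\}$ on $G$ is a subsolution on $\Omega$ with $\tilde u\le v$, i.e. $\tilde u\in S$. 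Finally, picking $z_n\to z_0$ with $u(z_n)\to u_*(z_0)$ gives $q_{\kappa,\theta}(z_n)\to u_*(z_0)+\theta>u(z_n)$ for large $n$, so $\tilde u(z_n)>u(z_n)$, contradicting $\tilde u\le\sup S=u$. Hence $u_*$ is a supersolution, and $u$ is a discontinuous viscosity solution with $u=u^*\in S$.

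The \emph{main obstacle} is the last paragraph: one must choose the parameters $\kappa,\theta,\rho$ in a mutually compatible way so that the bumped function is at once a subsolution on $B(z_0,\rho)$, stays below $v$, lies below $u$ near $\partial B(z_0,\rho)$ (for the gluing of Proposition \ref{prop gluing}), yet strictly exceeds $u$ along a sequence realizing $u_*(z_0)$. The clean way to keep this manageable is the dichotomy above, which removes the delicate borderline case $u_*(z_0)=v(z_0)$ for free using only that $v$ is a supersolution, leaving just the interior perturbation to handle.
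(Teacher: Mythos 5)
Your proof is correct and follows the same Perron strategy as the paper: show $u^*$ is a subsolution via Proposition \ref{prop liminf limsup}, use boundary continuity of $v$ plus the comparison principle (Theorem \ref{main 2}) to get $u^*\le v$ and hence $u=u^*\in S$, then prove $u_*$ is a supersolution by a bump-and-glue contradiction. Two points where you diverge from (and improve on) the paper's write-up are worth noting. First, you verify $S\neq\emptyset$ by exhibiting the explicit barrier $\delta(|z|^2-A)+\inf v$; the paper takes this for granted. Second, and more substantively: in the bump step the paper perturbs the test function to $\tilde{\eta}=\eta-s|z-z_0|^2+\min\{s,sr^2/4\}$ and asserts the glued function $\tilde{u}$ lies in $S$ without checking $\tilde{u}\le v$, which is not automatic since the bump raises the value at $z_0$ and could overshoot $v$ there. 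Your dichotomy on whether $u_*(z_0)=v(z_0)$ handles exactly this: the borderline case is killed by testing $v$ directly with $q$, and in the remaining case lower semicontinuity of $v$ lets you keep the bumped function below $v$ on the small ball. (The paper's version can also be repaired by one more application of Theorem \ref{main 2} to $\tilde{u}$ and $v$, since $\tilde{u}=u$ near $\partial\Omega$; either fix is fine, but yours is self-contained.) The parameter bookkeeping you flag as the main obstacle does close up in the order you indicate: fix $\kappa$ and $\rho$ first for the classical subsolution property and for $q\le u_*$ on $\overline{B(z_0,\rho)}$, then take $\theta$ small subject to $\theta<\kappa\rho^2$ and to the $v$-constraint.
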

\begin{proof}
	By Proposition 
	\ref{prop liminf limsup}, we have $u^*$ is a viscosity subsolution of the equation $F(D^2w)=\psi (x, w)$
	in $\Omega$. Moreover, since $u\leq v$ in $\Omega$, we have
	\begin{center}
		$\limsup\limits_{\Omega\ni z\to \hat{z}}u^*(z)=\limsup\limits_{\Omega\ni z\to \hat{z}}u(z)\leq
		\lim\limits_{\Omega\ni z\to \hat{z}}v(z)=v(\hat{z}),$	
	\end{center}
	for all $\hat{z}\in\partial\Omega$. Then, it follows from Theorem \ref{main 3} that $u^*\leq v$. Hence, $u^*\in S$ and $u=u^*$.
	 It remains to show that $u_*$ is a viscosity supersolution.
	
	Assume that  there exist a point
	$z_0\in\Omega$, an open neighbourhood $U\subset\Omega$ of $z_0$ and a function $\eta\in C^2(U)$ such that
	$\eta(z_0)=u_*(z_0)$, $\eta\leq u_*|_U$, $H\eta(z_0)\in M(\Gamma, n)$
	and 
	\begin{center}
		$F(H\eta (z_0))>\psi(z_0, \eta(z_0))$.
	\end{center}
	By the continuity of $F, \psi, \eta$ and $H\eta$, there exist
	$r, s>0$ such that $\overline{B(z_0, r)}\subset U$, $H\eta(x)-2sI\in M(\Gamma, n)$ for all 
	$z\in B(z_0, r)$ and
	\begin{center}
		$F(H\eta(z)-2sI)>\psi(z, \eta(z)+s),$
	\end{center}
	for every $z\in B(z_0, r)$. Denote
	\begin{center}
		$\tilde{\eta}(z)=\eta (z)-s|z-z_0|^2+\min\{s, \dfrac{sr^2}{4}\}$.
	\end{center}
	We have
	\begin{equation}
	F(H\tilde{\eta}(z))\geq \psi(z, \tilde{\eta}(z)), \quad\forall |z-z_0|\leq r,
	\end{equation}
and
\begin{equation}
\tilde{\eta}(z)\leq u(z), \quad\forall r/2\leq |z-z_0|\leq r.
\end{equation}
Denote
	\begin{center}
		$\tilde{u}(z)=\begin{cases}
		u(z)\quad\mbox{if}\quad z\in\Omega\setminus B(z_0, r),\\
		\max\{u(z),\tilde{\eta}(z) \}\quad\mbox{if}\quad z\in  B(z_0, r).
		\end{cases}$
	\end{center}
	Then $\tilde{u}\in S$ and $\tilde{u}\geq u$. Since $u=\sup\{w: w\in S \}$, we have $\tilde{u}=u$. Moreover, 
	\begin{center}
		$\tilde{u}_*(z_0)\geq\tilde{\eta}(z_0)\geq u_*(z_0)+\min\{s, \dfrac{sr^2}{4}\}>u_*(z_0).$
	\end{center}
	and it implies that $\tilde{u}$ is not identical to $u$. We get a contradiction. Thus, 
	$u_*$ is a supersolution of \eqref{eq0 perron}.
\end{proof}
Note that every harmonic function is a supersolution of \eqref{eq0 perron}. 
By using Theorem \ref{main 2} and Proposition \ref{prop perron}, we obtain the following result which will
be used in the next section:
\begin{Prop}\label{prop existence}
Assume that $\Omega$ is a bounded smooth domain and $\varphi$ is a continuous function on $\partial\Omega$
satisfying
\begin{center}
		$\lim\limits_{R\to\infty}f(R, R,..., R)>\psi (z, \sup\limits_{\partial\Omega}\varphi),$
\end{center}
for every $z\in\Omega$. Suppose that there exists $\underline{u}\in USC(\overline{\Omega})$ such that
 $\underline{u}|_{\partial\Omega}=\varphi$ and $F(H\underline{u})\geq\psi(z, \underline{u})$ in the 
 viscosity sense in $\Omega$. Then, there exists a unique $u\in C(\overline{\Omega})$ such that
 $u|_{\partial\Omega}=\varphi$ and $F(Hu)=\psi(z, u)$ in the  viscosity sense in $\Omega$.
\end{Prop}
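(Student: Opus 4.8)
The plan is to obtain uniqueness from the comparison principle (Theorem \ref{main 2}) and existence from the Perron method (Proposition \ref{prop perron}), using a harmonic function as a continuous upper barrier and the given subsolution $\underline u$ to control the candidate solution from below.

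\emph{A priori bound and uniqueness.} Set $c=\sup_{\partial\Omega}\varphi$. The constant $c$ is a supersolution, since any lower test function $q$ for $c$ at a point has $Hq\le 0$ there, so either $F(Hq)=-\infty$ or $Hq=0$ and $F(0)=f(0,\dots,0)=0\le\psi$. If $u\in C(\overline\Omega)$ is any solution with $u=\varphi$ on $\partial\Omega$, then comparing the subsolution $u$ with $c$ via Theorem \ref{main 2} gives $u\le c$ in $\Omega$; its growth hypothesis holds because $\psi(\cdot,c)$ is continuous and, by assumption, $\sup_K\psi(z,c)<\lim_{R\to\infty}f(R,\dots,R)$ for every $K\Subset\Omega$. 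Now if $u_1,u_2$ are two such solutions, then $u_2\le c$ and the monotonicity of $\psi$ give $\psi(z,u_2(z))\le\psi(z,c)$, so Theorem \ref{main 2} applies to the subsolution $u_1$ and the supersolution $u_2$ (equal on $\partial\Omega$) and yields $u_1\le u_2$; by symmetry $u_1=u_2$.

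\emph{Existence via Perron.} Let $h$ solve $\Delta h=0$ in $\Omega$ with $h=\varphi$ on $\partial\Omega$; since $\Omega$ is smooth it is regular, so $h\in C(\overline\Omega)$, and as noted just before the statement $h$ is a viscosity supersolution. By the maximum principle $h\le c$, so $h$ satisfies the hypotheses of Proposition \ref{prop perron}. Comparing $\underline u$ with $h$ (both equal to $\varphi$ on $\partial\Omega$) via Theorem \ref{main 2} gives $\underline u\le h$, so $\underline u$ lies in the Perron family $S=\{w:\ w$ is a subsolution, $w\le h\}$. Proposition \ref{prop perron} then produces $u=\sup\{w:w\in S\}$ with $u=u^*\in S$ a subsolution, $u_*$ a supersolution, and $\underline u\le u\le h$ on $\Omega$; in particular $u$ is bounded (from below by any steep strictly plurisubharmonic subsolution lying in $S$).

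\emph{Boundary values and conclusion.} Because $u\le h$ with $h$ continuous, $\limsup_{z\to\hat z}u(z)\le\varphi(\hat z)$, and because $u\ge\underline u$ with $\underline u(\hat z)=\varphi(\hat z)$ we get $u(\hat z)=\varphi(\hat z)$ for $\hat z\in\partial\Omega$; hence $u^*=\varphi$ on $\partial\Omega$. The \emph{main obstacle} is the matching lower estimate $\liminf_{z\to\hat z}u(z)\ge\varphi(\hat z)$: since $\underline u$ is merely upper semicontinuous, $u\ge\underline u$ does not by itself control the lower limit at the boundary. To handle this I would, for each $\hat z\in\partial\Omega$ and $\epsilon>0$, build a continuous subsolution $w_{\hat z,\epsilon}\in S$ with $w_{\hat z,\epsilon}(\hat z)\ge\varphi(\hat z)-\epsilon$, using the smoothness of $\partial\Omega$ (a defining function $\rho$ and the exterior/interior ball condition) together with the fact that, thanks to $\lim_{R\to\infty}f(R,\dots,R)>\psi$, a function of the form $\varphi(\hat z)-\epsilon+t\rho+A|z-\hat z|^2$ with $A$ large is strictly $\Gamma$-subharmonic and a subsolution near $\hat z$; the gluing Proposition \ref{prop gluing} then inserts such a local barrier into a global member of $S$ below $h$. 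Granting this, $u\ge w_{\hat z,\epsilon}$ yields $\liminf_{z\to\hat z}u(z)\ge\varphi(\hat z)-\epsilon$, so $u_*=\varphi$ on $\partial\Omega$ as well. Finally, applying Theorem \ref{main 2} to the subsolution $u^*$ and the supersolution $u_*$ (which agree with $\varphi$ on $\partial\Omega$, and whose growth hypothesis holds since $u_*\le h\le c$) gives $u^*\le u_*$ in $\Omega$; as always $u_*\le u\le u^*$, we conclude $u^*=u_*=:u\in C(\overline\Omega)$ is the desired viscosity solution with $u=\varphi$ on $\partial\Omega$.
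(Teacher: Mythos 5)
Your skeleton (uniqueness from Theorem \ref{main 2}; existence by running Proposition \ref{prop perron} with the harmonic extension $h$ of $\varphi$ as the supersolution; then comparing $u^*$ with $u_*$) is exactly the route the paper intends --- it gives no proof beyond ``combine Theorem \ref{main 2} and Proposition \ref{prop perron}'' --- and your handling of the a priori bound $u\le c$, the upper boundary estimate via $u\le h$, and the final comparison $u^*\le u_*$ is fine. The problem is the step you yourself flag as the main obstacle, the lower boundary estimate $\liminf_{z\to\hat z}u(z)\ge\varphi(\hat z)$: your proposed repair does not work. A barrier of the form $\varphi(\hat z)-\epsilon+t\rho+A|z-\hat z|^2$ is $\Gamma$-subharmonic only when $A$ dominates $t\|H\rho\|$, and since $\rho$ vanishes on \emph{all} of $\partial\Omega$, the term $t\rho$ gives you nothing on $\partial\Omega\setminus B(\hat z,\delta)$, where your candidate equals $\varphi(\hat z)-\epsilon+A|z-\hat z|^2$ and in general exceeds $\varphi$; you cannot simultaneously make the function a subsolution and push it below $\varphi$ away from $\hat z$. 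Such boundary barriers exist only under a $\Gamma$-pseudoconvexity condition on $\partial\Omega$, which is not among the hypotheses. Indeed the statement read literally is not salvageable by any barrier argument: take $F=F_n$, $\psi=0$, $\Omega=\{1<|z|<2\}\subset\C^n$ ($n\ge 2$), $\varphi=0$ on $\{|z|=1\}$, $\varphi=-1$ on $\{|z|=2\}$, and $\underline u\equiv-1$ on $\Omega\cup\{|z|=2\}$, $\underline u=0$ on $\{|z|=1\}$; this $\underline u$ is in $USC(\overline\Omega)$, restricts to $\varphi$ on $\partial\Omega$, and is a subsolution, yet averaging over the unitary group shows every plurisubharmonic candidate is $\le-1$, so no continuous solution attains $\varphi$.

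The hypothesis that is meant to replace the barrier is $\underline u$ itself: the intended reading of $\underline u|_{\partial\Omega}=\varphi$ is that $\underline u$ attains $\varphi$ continuously from inside, i.e. $\lim_{\Omega\ni z\to\hat z}\underline u(z)=\varphi(\hat z)$ (upper semicontinuity alone only bounds the $\limsup$, which is the useless direction here, as the example above shows). Under that reading the obstacle evaporates: $u\ge\underline u$ in $\Omega$ gives $\liminf_{\Omega\ni z\to\hat z}u(z)\ge\liminf_{\Omega\ni z\to\hat z}\underline u(z)=\varphi(\hat z)$ directly, and no construction near $\partial\Omega$ is needed. So you should delete the barrier paragraph, state the strengthened boundary behaviour of $\underline u$ explicitly, and use $\underline u$ as the lower barrier; the rest of your argument then closes as written. (Two small points worth tightening: $\underline u\in USC(\overline\Omega)$ need not be bounded below, so the boundedness of $u$ should come from exhibiting one bounded element of $S$, e.g. $A(|z|^2-R^2)+\inf_{\partial\Omega}\varphi$ with $A$ large, as you parenthetically suggest; and the passage from the pointwise hypothesis $\lim_{R\to\infty}f(R,\dots,R)>\psi(z,c)$ to the $\sup$ over compact subsets, which Theorem \ref{main 2} needs, uses the continuity of $\psi(\cdot,c)$ exactly as you say.)
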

\section{Maximal viscosity subsolutions}
In this section, we study some properties of maximal viscosity subsolutions (see below for the defintion).
 Theorem \ref{main 3}
is deduced by combining Proposition \ref{prop maximal} and Theorem \ref{the maximal}.

Similar to the concept of maximal plurisubharmonic functions \cite{Sad81} (see also \cite{Kli91}), 
we say that a viscosity subsolution $u$ for \eqref{main free boundary} is maximal
if $u$ satisfies the following condition: For every open set 
$U\Subset\Omega$ and for each $v\in USC(\overline{U})$ such that $v$ is a subsolution
for \eqref{main free boundary} in $U$ and $v\leq u$ in $\partial U$, we have $v\leq u$ in $U$.
\begin{Prop}\label{prop maximal}
	Under the assumption of Theorem \ref{main 3}, the function $\Phi_v$ is a maximal viscosity subsolution 
	for \eqref{main free boundary}.
\end{Prop}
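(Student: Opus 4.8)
The plan is to exploit the two tools already established under the hypotheses of Theorem~\ref{main 3}: the comparison principle (Theorem~\ref{main 2}) and the gluing construction (Proposition~\ref{prop gluing}). The idea is that any admissible competitor on a subdomain can be glued together with $\Phi_v$ to produce a \emph{global} subsolution that is still dominated by $v$; since $\Phi_v$ is, by its very definition, the largest such subsolution, the competitor must lie below it. Recall from Proposition~\ref{prop perron} that $\Phi_v$ is itself a viscosity subsolution of \eqref{main free boundary} with $\Phi_v=\Phi_v^{*}$, that $\Phi_v\leq v$ on $\Omega$, and that $\Phi_v$ is the pointwise supremum of the family $S=\{w:\ w\text{ is a subsolution of }\eqref{main free boundary},\ w\leq v\}$. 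To prove maximality I would fix an open set $U\Subset\Omega$ and a competitor $w\in USC(\overline U)$ which is a subsolution of \eqref{main free boundary} in $U$ with $w\leq\Phi_v$ on $\partial U$, and show $w\leq\Phi_v$ in $U$.

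First I would compare $w$ with $v$ on $U$. On $\partial U$ one has $w\leq\Phi_v\leq v$. Since $\psi$ is independent of the last variable and the hypothesis of Theorem~\ref{main 3} gives $\lim_{R\to\infty}f(R,\dots,R)>\psi(z)$ for every $z$, continuity of $\psi$ yields $\lim_{R\to\infty}f(R,\dots,R)>\sup_{K}\psi$ for every $K\Subset U$, so the growth hypothesis of the comparison principle survives restriction to $U$. Applying Theorem~\ref{main 2} on the bounded domain $U$ to the subsolution $w$ and the supersolution $v$ then gives $w\leq v$ in $U$.

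Next I would glue. Set
\[
\tilde w=\begin{cases}\Phi_v & \text{in }\Omega\setminus U,\\[2pt] \max\{\Phi_v,w\} & \text{in }U.\end{cases}
\]
Because $w\in USC(\overline U)$ and $w\leq\Phi_v$ on $\partial U$, for every $z_0\in\partial U=\partial U\cap\Omega$ we have $\limsup_{U\ni z\to z_0}w(z)\leq w(z_0)\leq\Phi_v(z_0)$, so Proposition~\ref{prop gluing} applies and $\tilde w$ is a viscosity subsolution of \eqref{main free boundary} in $\Omega$. Moreover $\tilde w\leq v$ everywhere: on $\Omega\setminus U$ this is $\Phi_v\leq v$, and on $U$ it is $\max\{\Phi_v,w\}\leq v$ by the comparison step together with $\Phi_v\leq v$. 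Hence $\tilde w\in S$, and since $\Phi_v=\sup_{w'\in S}w'$ pointwise we get $\tilde w\leq\Phi_v$; in particular, on $U$,
\[
w\leq\max\{\Phi_v,w\}=\tilde w\leq\Phi_v,
\]
which is exactly the asserted maximality.

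The boundary inequalities feeding Proposition~\ref{prop gluing} and Theorem~\ref{main 2} are routine, so the only genuine point to watch is the legitimacy of invoking the comparison principle on the subdomain $U$: its growth hypothesis is fine by continuity of $\psi$ on compact subsets of $U$, but one should verify that the competitor $w$ meets the integrability requirement of Theorem~\ref{main 2}. A USC function on the compact set $\overline U$ is automatically bounded above; if $w$ fails to be bounded below, I expect this to be the main technical nuisance, to be dispatched either by restricting attention to bounded competitors in the definition of maximality, or by observing that the doubling-of-variables argument proving Theorem~\ref{main 2} uses only boundedness from above of the subsolution (the relevant maximum is attained at a point where $w-v>0$, hence where $w$ is finite).
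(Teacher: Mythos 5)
Your proof is correct and follows essentially the same route as the paper's: glue the competitor with $\Phi_v$ via Proposition~\ref{prop gluing}, use the comparison principle (Theorem~\ref{main 2}) to see that the glued function belongs to $S$, and conclude from $\Phi_v=\sup S$. The only cosmetic difference is that the paper applies Theorem~\ref{main 2} once, to the glued function on all of $\Omega$ (where it equals $\Phi_v\le v$ near $\partial\Omega$), rather than first comparing $w$ with $v$ on $U$; that ordering also sidesteps the lower-boundedness worry you raise, since the glued function is bounded below by $\Phi_v$.
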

\begin{proof}
	By Proposition \ref{prop perron}, we have $\Phi_v$ is a viscosity subsolution of
	\eqref{main free boundary}. We will show that $\Phi_v$ is maximal.
	
	Let $U$ be a relatively open subset of $\Omega$. Let 
	$w\in USC(\overline{U})$ such that $w$ is a subsolution
	for \eqref{main free boundary} in $U$ and $w\leq \Phi_v$ in $\partial U$. By Proposition
	\ref{prop gluing}, the function
	\begin{center}
		$u(z)=\begin{cases}
		\Phi_v(z)\quad\mbox{if}\quad z\in \Omega\setminus U,\\
		\max\{w(z), \Phi_v(z)\}\quad\mbox{if}\quad z\in U,
		\end{cases}$
	\end{center}
is a subsolution of \eqref{main free boundary} in $\Omega$. Since $u=\Phi_v\leq v$ in $\Omega\setminus U$,
it follows from Theorem \ref{main 2} that $u\leq v$ in $\Omega$. Then, by the definition of $\Phi_v$, we get
$u\leq\Phi_v$ in $\Omega$. 

 Thus $\Phi_v$ is a maximal viscosity subsolution of \eqref{main free boundary}.
\end{proof}
 \begin{The}\label{the maximal}
 	Assume that $\psi$ does not depend on the last variable
 	and
 	\begin{equation}\label{eq main3}
 	\lim\limits_{R\to\infty}f(R, R,..., R)>\psi (z),
 	\end{equation}
 	for every $z\in\Omega$.
	Suppose that $u$ is a maximal viscosity subsolution
	for \eqref{main free boundary} in $\Omega$.
	Then, for every relatively compact open subset $U$ of $\Omega$, there exists
	a decreasing sequence $u_j$ of viscosity solutions of \eqref{main free boundary} in $U$ such that
	$\lim_{j\to\infty}u_j=u$ in $U$.
\end{The}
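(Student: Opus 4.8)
The plan is to approximate $u$ from above by solving a sequence of Dirichlet problems on a smooth domain sandwiched between $U$ and $\Omega$, and then to invoke the maximality of $u$ to force the limit of these solutions to equal $u$. Since the conclusion concerns only $U$, I would first fix smooth domains with $U\Subset\Omega'\Subset\Omega''\Subset\Omega$ and produce the sequence on $\Omega'$, restricting it to $U$ at the end. Because $\psi$ is independent of the last variable and $\overline{\Omega''}\subset\Omega$ is compact, the supremum $\sup_{\Omega''}\psi=\psi(z^*)$ is attained, so \eqref{eq main3} gives $\lim_{R\to\infty}f(R,\dots,R)>\sup_{\Omega''}\psi$. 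Hence Corollary \ref{cor1 main1} applies on $\Omega''$ and yields a decreasing sequence $\{v_j\}$ of smooth $\Gamma$-subharmonic functions with $F(Hv_j)\geq\psi$ on $\Omega''$ and $v_j\searrow u$ there. In particular each $v_j$ is a classical subsolution on a neighbourhood of $\overline{\Omega'}$ and $v_j|_{\partial\Omega'}$ is continuous.

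Next, for each $j$ I would apply Proposition \ref{prop existence} on the smooth domain $\Omega'$ with boundary data $\varphi_j:=v_j|_{\partial\Omega'}$: the function $v_j$ witnesses the required subsolution hypothesis, and \eqref{eq main3} furnishes $\lim_{R\to\infty}f(R,\dots,R)>\psi(z)$ for $z\in\Omega'$ (recall $\psi$ does not depend on $r$). This produces a unique $u_j\in C(\overline{\Omega'})$ with $u_j|_{\partial\Omega'}=\varphi_j$ solving $F(Hu_j)=\psi$ in the viscosity sense. I would then extract two comparisons from Theorem \ref{main 2}. Since $v_j$ is a subsolution and $u_j$ a supersolution on $\Omega'$ which agree on $\partial\Omega'$, comparison gives $u_j\geq v_j\geq u$ on $\Omega'$; comparing $v_j$ with $u_j$ rather than $u$ with $u_j$ keeps $u$ out of the comparison and sidesteps any question of whether $u$ is bounded below. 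Since $u_{j+1}$ and $u_j$ are solutions with $u_{j+1}=v_{j+1}\leq v_j=u_j$ on $\partial\Omega'$, comparison also gives $u_{j+1}\leq u_j$, so $\{u_j\}$ is decreasing.

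Let $\tilde u=\lim_j u_j=\inf_j u_j$ on $\overline{\Omega'}$. As a decreasing limit of the continuous functions $u_j$ it is upper semicontinuous on $\overline{\Omega'}$; since $\tilde u\geq\inf_j v_j=u$ and $u\not\equiv-\infty$, it is not identically $-\infty$, so Proposition \ref{prop liminf limsup} shows $\tilde u$ is a viscosity solution, in particular a subsolution, on $\Omega'$. The crucial point is to identify its boundary values: for $z_0\in\partial\Omega'$ one has $\tilde u(z_0)=\inf_j u_j(z_0)=\inf_j v_j(z_0)=u(z_0)$, so $\tilde u=u$ on $\partial\Omega'$. Applying the maximality of $u$ with the open set $\Omega'\Subset\Omega$ and the competitor $\tilde u\in USC(\overline{\Omega'})$, which is a subsolution on $\Omega'$ with $\tilde u\leq u$ on $\partial\Omega'$, gives $\tilde u\leq u$ in $\Omega'$; combined with $\tilde u\geq u$ this yields $\tilde u=u$ on $\Omega'$. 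Restricting each $u_j$ to $U$ then gives the desired decreasing sequence of viscosity solutions on $U$ with $\lim_j u_j=u$.

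I expect the main obstacle to be the bookkeeping at the boundary: one must arrange the smooth subsolutions $v_j$ so that their restrictions to $\partial\Omega'$ are continuous and decrease pointwise to $u|_{\partial\Omega'}$, so that the limit $\tilde u$ inherits exactly the boundary values of $u$ and maximality can be applied. Working on a smooth intermediate domain $\Omega'$, on which both Corollary \ref{cor1 main1} and Proposition \ref{prop existence} are available, is what makes this clean; the remaining care is to check the limit-at-infinity hypotheses of each cited result on the relevant compact subdomains, which follows from the continuity of $\psi$, compactness, and \eqref{eq main3}.
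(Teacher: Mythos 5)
Your proposal is correct and follows essentially the same route as the paper: reduce to a smooth intermediate domain, use Corollary \ref{cor1 main1} to get smooth classical subsolutions $v_j\searrow u$, solve the Dirichlet problem with boundary data $v_j|_{\partial\Omega'}$ via Proposition \ref{prop existence}, order the resulting solutions by the comparison principle, and force the decreasing limit down to $u$ by maximality after matching boundary values. Your observation that comparing $u_j$ with $v_j$ (rather than directly with $u$) avoids any boundedness issue for $u$ is a minor but sensible refinement of the paper's argument.
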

In order to prove Theorem \ref{the maximal}, we need the following lemma:
\begin{Lem}\label{lem smooth}
For every $\epsilon>0$, there exists an open set $U$ with smooth boundary such that 
$\Omega_{\epsilon}\Subset U\Subset\Omega$, where
\begin{center}
	$\Omega_{\epsilon}=\{z\in\Omega: d(z, \partial\Omega)>\epsilon \}.$
\end{center}
\end{Lem}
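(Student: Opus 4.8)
The plan is to realize $U$ as a superlevel set of a smooth approximation of the boundary distance function, using Sard's theorem to guarantee that the resulting boundary is a regular hypersurface. I would first work with $d(z)=\mathrm{dist}(z,\C^n\setminus\Omega)$, defined on all of $\C^n$ and vanishing off $\Omega$; it is $1$-Lipschitz, hence continuous. Since $\Omega$ is bounded, the set $\{d\geq\epsilon\}$ is a bounded closed subset of $\C^n$, hence compact, and it is contained in $\Omega$ because $d>0$ forces $z\in\Omega$; moreover $\overline{\Omega_\epsilon}\subset\{d\geq\epsilon\}$.

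Next I would mollify. Set $d_\delta=d\ast\chi_\delta$, which is $C^\infty$, and observe that the Lipschitz bound gives the uniform estimate $\|d_\delta-d\|_\infty\leq\delta$. Fix $\delta<\epsilon/4$ and, invoking Sard's theorem, choose a regular value $c$ of $d_\delta$ with $\epsilon/2<c<3\epsilon/4$; such $c$ exists because the set of regular values has full measure. Define $U=\{z\in\C^n:d_\delta(z)>c\}$, which is open.

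It then remains to check the three required properties by elementary estimates. For $\Omega_\epsilon\Subset U$: on $\{d\geq\epsilon\}$ one has $d_\delta\geq d-\delta\geq 3\epsilon/4>c$, so $\overline{\Omega_\epsilon}\subset\{d\geq\epsilon\}\subset U$, and the left-hand set is compact. For $U\Subset\Omega$: since $\overline{U}\subset\{d_\delta\geq c\}$, on this set $d\geq d_\delta-\delta\geq c-\delta>\epsilon/4>0$, so $\overline{U}\subset\{d\geq\epsilon/4\}$, a compact subset of $\Omega$; thus $\overline{U}$ is compact and contained in $\Omega$. For the smoothness of $\partial U$: because $c$ is a regular value, $\nabla d_\delta$ does not vanish on $\{d_\delta=c\}$, so this level set is a smooth hypersurface, and nonvanishing of the gradient shows every such point is a boundary point of $U$, whence $\partial U=\{d_\delta=c\}$ is smooth.

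The one genuinely nontrivial input — the \emph{hard part} in the sense of being the only non-elementary ingredient — is Sard's theorem, which is needed precisely because the level sets of an arbitrary smooth function may fail to be manifolds; all the remaining work reduces to the uniform estimate $\|d_\delta-d\|_\infty\leq\delta$ together with the choice of the thresholds $\delta$ and $c$.
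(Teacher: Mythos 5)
Your proof is correct and follows essentially the same route as the paper: mollify the boundary distance function, invoke Sard's theorem to select a regular value in $(\epsilon/2,3\epsilon/4)$, and take the corresponding level set as $\partial U$. The only differences are cosmetic (you work with $\mathrm{dist}(\cdot,\C^n\setminus\Omega)$ globally and make the Lipschitz estimate $\|d_\delta-d\|_\infty\leq\delta$ explicit, whereas the paper mollifies the negative distance on $\Omega_{\epsilon/4}$), and your verification of the inclusions is, if anything, slightly more careful.
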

\begin{proof}
	Consider the function $g(z)=(d\ast\chi_{\epsilon/4})(z)$, where
 $\chi_{\epsilon/4}$ is the standard modifier, 
	$\ast$ is the convolution operator and $d(z)=-d(z, \partial\Omega).$ We have $g$ is well-defined and smooth
	in $\Omega_{\epsilon/4}$. Moreover, for every $\epsilon/2<t<3\epsilon/4$,
	\begin{center}
	$\Omega_{\epsilon}\Subset U_t\Subset\Omega_{\epsilon/4}$,
	\end{center}
where $U_t=\{z\in\Omega_{\epsilon/4}: g(z)<-t\}$. In particular, we have
	$\partial U_t\subseteq g^{-1}(t)\Subset\Omega_{\epsilon/4}$ for every $\epsilon/2<t<3\epsilon/4$. 
	By Sard's Theorem, there exists $t_0\in (\epsilon/2, 3\epsilon/4)$ such that $Dg(z)\neq 0$ for every
	$z\in g^{-1}(t_0)$. Then, $\partial U_{t_0}= g^{-1}(t_0)$ and $U:=U_{t_0}$ is a smooth open set satisfying 
	$\Omega_{\epsilon}\Subset U\Subset\Omega$.
	
	The proof is completed.
\end{proof}
\begin{proof}[Proof of Theorem \ref{the maximal}]
	 By Lemma \ref{lem smooth}, there 
	exists a smooth open set $V$ such that $U\Subset V\Subset\Omega$. By the compactness of $\overline{U}$,
	we can assume that $V$ has finite (open) connected components. Then the problem is reduced to the case where
	$U$ is a smooth domain.

 By Corollary \ref{cor1 main1}, for every open neighbourhood $W\Subset\Omega$ of $\overline{U}$, there exists
  a decreasing sequence $\{v_j\}$ of smooth
  $\Gamma$-subharmonic functions on $W$ such that $v_j\rightarrow u$ as $j\rightarrow\infty$ and 
  $F(Hv_j(z))\geq\psi (z)$ in $W$ for every $j$.
  By Proposition \ref{prop existence}, for each $j\in\Z^+$, there exists
 a unique $u_j\in C(\overline{U})$ such that
$u_j|_{\partial U}=v_j|_{\partial U}$ and $F(Hu_j)=\psi(z)$ in the  viscosity sense in $U$.
We will show that $u_j$ decreases to $u$ as $j\rightarrow\infty$.

It follows from Theorem \ref{main 2} that $u_j\geq u_{j+1}\geq u$, and then
\begin{equation}\label{eq1 proof main 3}
\tilde{u}:=\lim_{j\to\infty}u_j\geq u,
\end{equation}
in $U$. It follows from Proposition \ref{prop liminf limsup} that $\tilde{u}$ is a viscosity subsolution of the equation $F(Hw)=\psi(z)$.  Moreover, we have
\begin{center}
$\tilde{u}|_{\partial U}=\lim\limits_{j\to\infty}u_j|_{\partial U}
=\lim\limits_{j\to\infty}v_j|_{\partial U}
=u|_{\partial U}.$
\end{center}
Since $u$ is a maximal viscosity subsolution of the equation $F(Hw)=\psi(z, w)$,
 we get
\begin{equation}\label{eq2 proof main 3}
u\geq \tilde{u},
\end{equation}
in $U$. Combining \eqref{eq1 proof main 3} and \eqref{eq2 proof main 3}, we obtain 
\begin{center}
	$u=\tilde{u}=\lim\limits_{j\to\infty}u_j,$
\end{center}
in $U$.

The proof is completed.
\end{proof}
\begin{Cor}
Under the assumption of Theorem \ref{the maximal}, if $u$ is bounded then $u$ is also a discontinuous 
viscosity solution of \eqref{main free boundary}.
\end{Cor}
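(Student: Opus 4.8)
By Definition~\ref{def vis}, to prove that $u$ is a discontinuous viscosity solution of \eqref{main free boundary} it suffices to check that $u^{\ast}$ is a subsolution and that $u_{\ast}$ is a supersolution. The first is immediate: a maximal viscosity subsolution is in particular a viscosity subsolution, hence upper semicontinuous, so $u^{\ast}=u$ is a subsolution. Since the supersolution property is local, it is enough to prove that $u_{\ast}$ is a supersolution on an arbitrary relatively compact open set $U\Subset\Omega$. Note that one cannot simply invoke Proposition~\ref{prop liminf limsup}, since a \emph{decreasing} limit of supersolutions need not be a supersolution; the point is precisely to identify the lower relaxed limit.

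Fix $U\Subset\Omega$ and apply Theorem~\ref{the maximal} to obtain a decreasing sequence $\{u_j\}$ of viscosity solutions of \eqref{main free boundary} in $U$, each continuous (by the construction behind Proposition~\ref{prop existence}), with $u_j\searrow u$ pointwise. The first step is the elementary identity
\begin{equation*}
u_{\ast}(z)=\liminf\limits_{j\to\infty,\ w\to z}u_j(w),\qquad z\in U,
\end{equation*}
whose ``$\geq$'' direction uses $u_j\geq u$ together with the definition of $u_{\ast}$, and whose ``$\leq$'' direction follows by choosing $w_k\to z$ with $u(w_k)\to u_{\ast}(z)$ and then $j_k\nearrow\infty$ with $u_{j_k}(w_k)\leq u(w_k)+1/k$. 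In particular $u_{\ast}$ is finite, because $u$ is bounded.

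Next I would run the standard perturbed‑minimum argument. Fix $z_0\in U$ and a lower test function $q$ for $u_{\ast}$ at $z_0$, and for $\delta>0$ set $q_{\delta}(z)=q(z)-\delta|z-z_0|^2$, so that $q_{\delta}$ is again a lower test function for $u_{\ast}$ at $z_0$ and $u_{\ast}-q_{\delta}$ has a \emph{strict} minimum $0$ at $z_0$ on some closed ball $\overline{B}(z_0,\rho)\subset U$. Let $z_j$ minimize the continuous function $u_j-q_{\delta}$ over $\overline{B}(z_0,\rho)$. The key step is to show $z_j\to z_0$: passing to a convergent subsequence $z_{j_k}\to\bar z$, the inequalities $u_{j_k}(z_{j_k})-q_{\delta}(z_{j_k})\leq u_{j_k}(w)-q_{\delta}(w)$, taken with $\liminf_k$ on the left (using $\liminf_k u_{j_k}(z_{j_k})\geq u_{\ast}(\bar z)$) and $\lim_k$ on the right for each fixed $w$, give
\begin{equation*}
u_{\ast}(\bar z)-q_{\delta}(\bar z)\ \leq\ \inf_{\overline{B}(z_0,\rho)}(u-q_{\delta})\ =\ 0,
\end{equation*}
the last equality because $u\geq u_{\ast}\geq q_{\delta}$ on $\overline{B}(z_0,\rho)$ while $u(w_n)-q_{\delta}(w_n)\to 0$ along a sequence $w_n\to z_0$ realizing $u_{\ast}(z_0)$. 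Since $u_{\ast}-q_{\delta}\geq 0$ with strict minimum at $z_0$, this forces $\bar z=z_0$, so the whole sequence $z_j\to z_0$ and the minimal values tend to $0$.

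For $j$ large, $z_j$ is then an interior minimum of $u_j-q_{\delta}$, so $q_{\delta}$ (up to an additive constant) is a lower test function for the supersolution $u_j$ at $z_j$, whence $F(Hq_{\delta}(z_j))\leq\psi(z_j)$ (recall $\psi$ is independent of its last argument). Letting $j\to\infty$ and then $\delta\to0$ yields $F(Hq(z_0))\leq\psi(z_0)$: when $Hq(z_0)\in M(\Gamma,n)$ this is just continuity of $F$ on the interior, while if $Hq(z_0)\in\partial M(\Gamma,n)$ or $Hq(z_0)\notin\overline{M(\Gamma,n)}$ one has $F(Hq(z_0))\leq 0\leq\psi(z_0)$ directly. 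Thus $u_{\ast}$ is a supersolution on $U$, hence on $\Omega$, and together with the subsolution property of $u^{\ast}$ this shows $u$ is a discontinuous viscosity solution of \eqref{main free boundary}. The main obstacle is the localization $z_j\to z_0$ with minimal values tending to $0$: this is exactly where the boundedness of $u$, the strict‑minimum normalization, and the identity $\inf_{\overline{B}}(u-q_{\delta})=0$ all enter; the passage to the limit in $F$ needs only the routine case analysis above, forced by $F$ being extended‑real‑valued and merely upper semicontinuous.
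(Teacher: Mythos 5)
Your argument is correct, but it is considerably longer than necessary, and the reason you give for avoiding the short route is a misreading of the paper's Proposition \ref{prop liminf limsup}. That proposition contains, besides the clause about monotone sequences, the clause: if $\{u_\alpha\}$ is a \emph{family} of viscosity supersolutions and $w=\inf_\alpha u_\alpha$ is locally bounded below, then the lsc regularization $w_*$ is a viscosity supersolution. For the decreasing sequence $\{u_j\}$ produced by Theorem \ref{the maximal} one has $\inf_j u_j=\lim_j u_j=u$, which is bounded by hypothesis, so this clause yields directly that $u_*=(\inf_j u_j)_*$ is a supersolution on each $U\Subset\Omega$; combined with the observation that $u^*=u$ is already a subsolution, this is exactly the paper's two-line proof. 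What you have done instead is re-derive that stability clause by hand in your special case: your identity $u_*(z)=\liminf_{j\to\infty,\,w\to z}u_j(w)$ identifies $u_*$ as the lower half-relaxed limit, and your perturbed strict-minimum argument (localization $z_j\to z_0$ with minimal values tending to $0$, then passage to the limit in $F$ with the case analysis at $\partial M(\Gamma,n)$ using $F\leq 0\leq\psi$ there) is precisely the classical Barles--Perthame proof of that clause. The steps are all sound — in particular the treatment of the mere upper semicontinuity of $F$ is a genuine point that the citation-based proof silently delegates to the proposition — so your version buys self-containedness and transparency at the cost of length, while the paper's buys brevity by leaning on a previously stated general fact that you overlooked.
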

\begin{proof}
By Theorem \ref{the maximal}, for every relatively compact open subset $U$ of $\Omega$, there exists
a decreasing sequence $u_j$ of viscosity solutions of \eqref{main free boundary} in $U$ such that
$\lim_{j\to\infty}u_j=u$ in $U$. Then, by Proposition \ref{prop liminf limsup}, $u_*=(\inf_ju_j)_*$
is a viscosity supersolution of \eqref{main free boundary} in $U$. Since $U$ is arbitrary, we get
$u_*$ is a viscosity supersolution of \eqref{main free boundary} in $\Omega$. Hence, $u$ is 
a discontinuous viscosity solution of \eqref{main free boundary}.
\end{proof}
\begin{Rem}
	By the same arguments as in the proof of Theorem \ref{the maximal},
	 if we assume further that $\Gamma, f$ and $\psi$ satisfy  the hypothesis of \cite[Theorem 1.1]{Li}
	then each maximal viscosity subsolution
	for \eqref{main free boundary} is approximated on every relatively compact subset of $\Omega$ by
	a decreasing sequence $u_j$ of classical solutions of \eqref{main free boundary}.
\end{Rem}
\begin{Rem}
	In general, if $u\in USC\cap L^{\infty}(\Omega)$
	 is a discontinuous viscosity solution of \eqref{main free boundary} then $u$ may not be a maximal visocity
	 subsolution. For example, let $\{a_j\}_{j=1}^{\infty}$ be a dense subset of the unit ball $\B^{2n}$ in
	 $\C^n$ and let
	 \begin{center}
	 	$v(z)=|z|^2+\sum\limits_{j=1}^{\infty}\dfrac{\log|z-a_j|}{2^j}.$
	 \end{center}
 We have $u=e^v$ is a bounded plurisubharmonic function and $u_*=0$ in $\B^{2n}$. Therefore, $u$
  is a discontinuous viscosity solution of the equation $F_n(Hw)=0$ in $\B^{2n}$. However, $u$ is
  not a maximal plurisubharmonic function in $\B^{2n}$, since its Monge-Amp\`ere measure 
  $(dd^cu)^n\geq e^{nv}(dd^cv)^n\geq e^{nv}(dd^c|z|^2)^n$ is not 
  identically $0$. Then $u$ is not a maximal viscosity solution of the equation $F_n(Hw)=0$ in $\B^{2n}$.
\end{Rem}

\end{document}